\documentclass[reqno,11pt]{amsart}

\usepackage{mathdots}
\usepackage{tikz}
\usepackage{tikz-cd}
\usetikzlibrary{automata}
\usepackage{amssymb}
\usepackage{amsgen}
\usepackage{amsmath}
\usepackage{amsthm}
\usepackage{mathrsfs}
\usepackage{cite}
\usepackage{amsfonts}
\usepackage{enumitem}
\usepackage{stmaryrd}

\newcommand{\sour}{\mathop{\boldsymbol s}\nolimits}

\newcommand{\ran}{\mathop{\boldsymbol r}\nolimits}

\newcommand{\Ind}{\mathop{\mathrm{Ind}}\nolimits}

\newcommand{\inv}{^{-1}}
\newcommand{\p}{\varphi}

\newcommand{\Tor}{\mathop{\mathrm{Tor}}\nolimits}

\usepackage{xcolor}

\newtheorem{Thm}{Theorem}[section]
\newtheorem{Prop}[Thm]{Proposition}

\newtheorem{Lemma}[Thm]{Lemma}
{\theoremstyle{definition}
}
{\theoremstyle{remark}
}
\newtheorem{Cor}[Thm]{Corollary}
{\theoremstyle{remark}
}
{\theoremstyle{remark}
}

{\theoremstyle{remark}
}
{\theoremstyle{remark}
}
{\theoremstyle{remark}
}
{\theoremstyle{remark}
\newtheorem*{Claim*}{Claim}}
\newtheorem*{theorema}{Theorem A}
\newtheorem*{theoremb}{Theorem B}

\numberwithin{equation}{section}

\title{A homological characterization of AF groupoids}

\author{Benjamin Steinberg}
\address[B.~Steinberg]{%
    Department of Mathematics\\
    City College of New York\\
    Marshak Science Building, Room 529\\
    160 Convent Avenue\\
    New York, New York 10031\\
    USA}
\email{bsteinberg@ccny.cuny.edu}

\thanks{The author was supported by the NSF grant DMS-2452324, a Simons Foundation Collaboration Grant, award number 849561, the Australian Research Council Grant DP230103184, and Marsden Fund Grant MFP-VUW2411.}
\date{\today}

\keywords{}
\subjclass[2020]{20J05, 22A22}
\begin{document}

\begin{abstract}
An ample groupoid is said to be AF if it is a directed union of compact open principal subgroupoids. In this paper, we provide a complete homological characterization of these groupoids. Specifically, we prove that an ample groupoid is AF if and only if it has homological dimension zero.  More generally, we characterize groupoids of homological dimension zero over a unital ring $R$.  
\end{abstract}

\maketitle
 
\section{Introduction}
An ample groupoid is AF (approximately finite) if it is a directed union of compact open principal subgroupoids~\cite{AFequiv,renaultaf,GPS,DeaconuAFdef,Matui}.  Note that the $C^*$-algebra associated with an AF groupoid is itself an AF algebra.     The significance of these groupoids was first recognized in the seminal work of Giordano, Putnam, and Skau~\cite{GPS} on Cantor dynamical systems.

Foundational work of Matui~\cite{Matui} highlighted an important homological feature: the homology of an AF groupoid with integer coefficients vanishes in all dimensions $n > 0$, and the homology in degree $0$ recovers the $K_0$-group of the $C^*$-algebra.
  This property was instrumental in Matui’s computation of the homology for graph groupoids of finite graphs without sources.

In this paper, we prove that AF groupoids are characterized by a stronger homological property,  namely that their homology with coefficients in \emph{any} module vanishes in dimensions greater than $0$. %, that is, we prove the following theorem.

\begin{theorema}
    An ample groupoid $\mathcal G$ is AF if and only if it is of homological dimension $0$.
\end{theorema}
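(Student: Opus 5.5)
Homological dimension zero means $H_n(\mathcal G,M)=\Tor_n^{\mathbb Z\mathcal G}(\mathbb Z\mathcal G^{(0)},M)=0$ for all $n>0$ and all $\mathcal G$-modules $M$. Equivalently, the trivial module $\mathbb Z\mathcal G^{(0)}$ (the functions on the unit space, or $\mathbb Z$ with the source-counting action) is flat as a right module over the ample groupoid ring $\mathbb Z\mathcal G$. The plan is to prove each direction through this flatness statement.

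\emph{AF implies dimension zero.} Write $\mathcal G=\bigcup_i \mathcal G_i$ as a directed union of compact open principal subgroupoids. Then $\mathbb Z\mathcal G=\varinjlim \mathbb Z\mathcal G_i$, and both $\Tor$ and the trivial module commute with this directed colimit. So it suffices to show that for a compact open principal groupoid $\mathcal K$ the trivial module is projective over $\mathbb Z\mathcal K$. A compact principal ample groupoid is locally a finite equivalence relation: after partitioning the unit space into clopen pieces, it is a finite disjoint union of groupoids of the form $X\times P_n$, where $P_n$ is the pair groupoid on $n$ points. Hence $\mathbb Z\mathcal K\cong\prod M_n(C(X_j,\mathbb Z))$, a finite product. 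The trivial module then corresponds to column modules, which are projective. Concretely, the idempotent $1_{X\times\{1\}}$ gives $\mathbb Z\mathcal K\cdot 1_{X\times\{1\}}\cong\mathbb Z\mathcal K^{(0)}$. We then pass to the limit using Lazard-type reasoning: a colimit of flat base changes gives $\Tor^{\mathbb Z\mathcal G}_n=\varinjlim \Tor^{\mathbb Z\mathcal G_i}_n=0$.

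\emph{Dimension zero implies AF.} First, take $M$ to be group rings of isotropy: for $x\in\mathcal G^{(0)}$, induce from the isotropy group $\mathcal G_x^x$. Shapiro's lemma then gives $H_n(\mathcal G_x^x,N)=0$ for all $n>0$, so every isotropy group has homological dimension $0$ and is therefore trivial. Trivial, because $H_1(\mathbb Z[C],-)\neq0$ for any nontrivial cyclic subgroup $C$; note that $\mathbb Z$-coefficients forbid even finite groups. So $\mathcal G$ is principal. Second, flatness of $\mathbb Z\mathcal G^{(0)}$ is a finitely presented condition. The module is generated by $1_U$ for compact open $U\subseteq \mathcal G^{(0)}$, with relations $1_{r(V)}=1_{s(V)}$ for compact open bisections $V$. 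Flatness means that each finite relation among these factors through a finitely generated free module, which yields bisection data that "split" the relation. I would use this to show that every compact open subset of $\mathcal G$ is contained in a compact open subgroupoid. Equivalently, for a compact open bisection $V$ the groupoid generated by $V$ is compact: flatness, applied to the element $1_{s(V)}-1_{r(V)}$ mapping to zero, produces an idempotent-like element of $\mathbb Z\mathcal G$ that forces the orbit equivalence generated by $V$ to be uniformly finite. A compact open subgroupoid is then principal, and the directed union is $\mathcal G$.

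The main obstacle is this last step: converting flatness (a Tor/Lazard statement) into compactness of generated subgroupoids. The first approach I would try is Villamayor's criterion. Over $\mathbb Z\mathcal G$, the cyclic module $\mathbb Z\mathcal G\,1_U/(\text{augmentation kernel})$ is flat iff for each $a$ in the kernel there is $b$ in the kernel with $a=ab$. Take $a=1_V-1_{s(V)}$; then $b$ has compact support in $\mathcal G$, and the equation $a(1-b)=0$ should show that the orbits under $V$ are bounded by the support of $b$. This would give compactness of $\langle V\rangle$.
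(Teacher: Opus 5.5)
Your forward direction is fine and is essentially the paper's: continuity of homology (Theorem~\ref{t:dlim.hom}) plus the compact principal case. Your $X\times\{1\}$ is the transversal $Y$ of Theorem~\ref{t:hom.zero.prinipal}; for the \emph{right} trivial module the relevant summand is $1_Y\mathbb Z\mathcal K\cong\mathbb Z\mathcal K^0$ via $\sour_*$, not $\mathbb Z\mathcal K1_Y$.

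The converse has a genuine gap. You only announce the step that turns flatness into compactness, and the plan for it fails as written.

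(i) The reduction ``equivalently, the groupoid generated by a single compact bisection $V$ is compact'' is not an equivalence. Compactness of every singly generated subgroupoid is only a periodicity condition; a discrete periodic group that is not locally finite shows the difference. It does not give compactness of the subgroupoid generated by finitely many bisections, which is what AF requires, so you must handle a finite family $V_1,\dots,V_k$ at once.

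(ii) Villamayor's criterion needs a unital ring, but $\mathbb Z\mathcal G$ is unital only when $\mathcal G^0$ is compact, and $\mathbb Z\mathcal G1_U/(\cdots)$ is not the trivial module. The sides are also mixed. For the right ideal $I=\ker\sour_*$ and the right module $\mathbb Z\mathcal G/I$, the criterion says $a=ba$ with $b\in I$. So the usable identity is $(1-b)a=0$, which makes $f=1-b$ right-invariant.

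(iii) A uniform bound on orbits yields compactness only via the fact that uniformly bounded groupoids have locally finite inverse semigroups of compact bisections (Proposition~\ref{p:uniformly.bdd.vs.local.finite}). You do not supply this step.

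(iv) Your principality step applies Shapiro's lemma to $\mathcal G^x_x$, which is not an open subgroupoid unless $x$ is isolated, so the available lemma does not apply. A version for isotropy groups can be proved: the free abelian group on $\sour\inv(x)$ is a directed colimit of the projectives $\mathbb Z\mathcal G1_U$, $U\ni x$, hence flat, and it is free over $\mathbb Z\mathcal G^x_x$. That is an argument you would have to give.

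The missing idea, as in the paper, is to pass to $\mathcal H_i=\bigcup S_i$, where $S_i$ runs over the finitely generated inverse subsemigroups of compact bisections.

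\begin{enumerate}
\item These subgroupoids are open and have compact unit space (Proposition~\ref{p:generate}), so $\mathbb Z\mathcal H_i$ is unital. They inherit dimension $0$ by Corollary~\ref{c:shapiro.cor}.
\item By Lemma~\ref{l:generating.set}, $\ker\sour_*$ is generated as a right ideal by the finitely many $1_V-1_{V\inv V}$ with $V$ a generator. Hence $\mathbb Z\mathcal H_i^0$ is finitely presented and flat, so projective, and there is an idempotent $e$ with $\ker\sour_*=e\mathbb Z\mathcal H_i$.
\item Then $f=1-e$ satisfies $\sour_*(f)=1_{\mathcal H_i^0}$ and $f(ab)=f(a)$. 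Writing $f$ as a combination of $r$ compact bisections bounds every range fiber of $\mathcal H_i$ by $r$. So $S_i$ is finite and $\mathcal H_i$ is quasi-compact.
\item Summing $\sour_*(f)(x)=1$ over a transversal for the isotropy group of $\mathcal H_i$ at $x$ gives $1=(\text{order})\cdot\sum_t f(t)$ in $\mathbb Z$. Hence all isotropy is trivial (Lemma~\ref{l:isotropy.control}).
\end{enumerate}

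Your Villamayor idea can be repaired into exactly this. Apply it inside $\mathbb Z\mathcal H_i$ in its finite form: this gives a single $b\in\ker\sour_*$ with $(1-b)(1_V-1_{V\inv V})=0$ for all generators $V$. Then $f=1-b$ has every property used above, since idempotence of $f$ is never needed. This also makes your separate principality step unnecessary.
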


More generally, %if $R$ is a unital ring, then 
the $R$-homological dimension of an ample groupoid $\mathcal G$ with respect to a unital ring $R$ is $0$ if its homology with coefficients in any $R\mathcal G$-module vanishes in dimensions greater than $0$.  Theorem~A is then the special case $R=\mathbb Z$ of the following more general result.

\begin{theoremb}
    Let $R$ be a unital ring and $\mathcal G$ an ample groupoid.  Then $\mathcal G$ has $R$-homological dimension $0$ if and only if the following two conditions hold:
    \begin{enumerate}
        \item $\mathcal G$ is a directed union of quasi-compact open subgroupoids;
        \item The order of any finite subgroup of an isotropy group is a unit in $R$. 
    \end{enumerate}
\end{theoremb}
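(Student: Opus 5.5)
We prove both directions by working with the homology of ample groupoids in terms of the Steinberg algebra $R\mathcal G$, namely $H_n(\mathcal G,M)=\Tor_n^{R\mathcal G}(RG^{(0)},M)$, where $RG^{(0)}=R_c(\mathcal G^{(0)})$ is the trivial module. Under this identification, $R$-homological dimension $0$ says exactly that the trivial module $R\mathcal G^{(0)}$ is flat as a right $R\mathcal G$-module. The whole proof therefore reduces to characterizing when this module is flat.

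For sufficiency, first suppose that $\mathcal H$ is a quasi-compact open subgroupoid in which the order of every finite isotropy subgroup is a unit in $R$.
\begin{enumerate}
\item Since $\mathcal H$ is quasi-compact and ample, it is proper, and its isotropy groups are finite.
\item By compactness we can partition $\mathcal H^{(0)}$ into finitely many compact open pieces over which $\mathcal H$ looks like a finite group times a pair groupoid. More carefully, we may use the averaging map $R\mathcal H\to R\mathcal H^{(0)}$, $f\mapsto$ the function $x\mapsto \frac{1}{|\mathcal H_x^x|}\sum_{\ran(g)=x}\ldots$.
\item This averaging gives a splitting of the augmentation $R\mathcal H^{(0)}\otimes\ldots$. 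I will instead aim directly for the statement that $R\mathcal H^{(0)}$ is a projective $R\mathcal H$-module. To see this, produce an idempotent-like element $e=\sum_i \frac{1}{|\mathcal H_x^x|}\chi_{U_i}$, where the $U_i$ are compact open bisections covering $\mathcal H$ chosen compatibly with a local decomposition; this gives $R\mathcal H^{(0)}\cong eR\mathcal H$ locally.
\end{enumerate}
Next, when $\mathcal G=\bigcup \mathcal H_\alpha$ is a directed union, the algebra $R\mathcal G$ is the directed union of the $R\mathcal H_\alpha$. Writing $\mathcal G^{(0)}=\bigcup\mathcal H_\alpha^{(0)}$, we get $R\mathcal G^{(0)}=\varinjlim R\mathcal H_\alpha^{(0)}\otimes_{R\mathcal H_\alpha}R\mathcal G$ (up to induction from the unit spaces). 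Each term is projective because induction preserves projectives, and a direct limit of projectives is flat. This proves vanishing of $\Tor_n$ for $n>0$. An alternative route, if available from earlier results, is to use the Shapiro-type lemma together with the fact that homology commutes with directed unions.

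For necessity, we extract each condition separately.
\begin{enumerate}
\item Condition (2): for $x\in\mathcal G^{(0)}$, homology of the isotropy group $G_x$ relates to $\mathcal G$-homology. One can restrict via the module $R[\text{orbit}]$: the induced module from $x$ gives $H_n(\mathcal G,\Ind M)\cong H_n(G_x,M)$. So every isotropy group has $R$-homological dimension $0$, that is, $R$ is flat over $RG_x$. For a finite subgroup $F$, flatness restricts to $F$, so $H_n(F,M)=0$ for all $n>0$. Taking $M=R/|F|R$-type cyclic modules and using periodic resolutions of cyclic subgroups then forces $|F|$ to be a unit.
\item Condition (1) is the main obstacle. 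Flatness of the trivial module should yield, for each compact open $K\subseteq\mathcal G^{(0)}$ and each compact open bisection $U$, a quasi-compact open subgroupoid containing them. The plan is to take the element $\chi_K-\chi_U$ (or $\chi_{\sour(U)}-\chi_U$), which lies in the kernel of the augmentation. By flatness (Villamayor's criterion: for a flat cyclic module $A/I$, each $a\in I$ satisfies $a=ab$ for some $b\in I$), we obtain $b=\sum r_i\chi_{V_i}$ in the augmentation ideal with $(\chi_{\sour(U)}-\chi_U)b=\chi_{\sour(U)}-\chi_U$.
\item Then take the subgroupoid generated by $U$ and the finitely many $V_i$. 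We want to show that finiteness of the support of $b$ bounds orbits, so that the subgroupoid generated by the compact set $U\cup\bigcup V_i$ is quasi-compact. This is the delicate combinatorial step: we would analyze it fiberwise, showing that for each unit $x$ the orbit of $x$ under the generated subgroupoid is contained in the finite set reachable within the support of $b$, and then use a uniform bound from compactness.
\end{enumerate}
Since every finite collection of bisections generates a quasi-compact open subgroupoid, condition (1) follows.
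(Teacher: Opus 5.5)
There are genuine gaps in both directions. The quasi-compact case of sufficiency, which you treat as routine, is where most of the paper's work goes. The step you flag as the main obstacle for necessity is left open, and the route you sketch for it cannot work.

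\textbf{Sufficiency for a quasi-compact $\mathcal H$.} Steps 2--3 do not constitute an argument.

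\emph{The local structure claim is false.} A quasi-compact ample groupoid need not look, on finitely many compact open pieces, like a finite group times a pair groupoid, because isotropy and orbit sizes can jump. Let $\mathbb Z/2$ swap the two copies in the one-point compactification $X$ of $\mathbb N\times\{0,1\}$. In $\mathbb Z/2\ltimes X$ the point $\infty$ has isotropy $\mathbb Z/2$ and orbit size $1$, yet every neighbourhood of it contains points with trivial isotropy and orbit size $2$.

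\emph{The proposed idempotent does not exist.} Your $e=\sum_i|\mathcal H^x_x|\inv 1_{U_i}$ is not well defined: $x$ is a free variable, and $x\mapsto|\mathcal H^x_x|$ is not locally constant. No averaging formula can work anyway. Take the pair groupoid on $\{y_1,y_2\}$ over $R=\mathbb F_2$, where (2) holds trivially. Then $R\mathcal H=M_2(\mathbb F_2)$, and any $f$ with $f\ker\sour_*=0$ and $\sour_*(f)=1_{\mathcal H^0}$ has the form $f(g)=\phi(\ran(g))$ with $\phi(y_1)+\phi(y_2)=1$, for instance $f=1_{\ran\inv(y_1)}$. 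The splitting picks a transversal; the uniform average $\phi\equiv\frac12$ does not exist over $\mathbb F_2$.

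\emph{What is missing is the paper's three-step argument.}
First, a quasi-compact group bundle is $\bigcup H$ for a finite group $H$ of invertible compact bisections. $|H|$ is a unit, because an element of prime order $p$ in $H$ produces an isotropy element of order $p$. Then $|H|\inv\sum_{V\in H}1_V$ splits $\sour_*$ (Theorem~\ref{t:grp.bundle}).
Second, if all orbits have equal size, $\mathcal G^0\to\mathcal G^0/\mathcal R$ is a local homeomorphism (Proposition~\ref{p:local.homeo.equal.orbit}). This gives a compact open transversal $Y$ with $1_Y$ a full idempotent, and Theorem~\ref{t:cheap.Morita} reduces to the group bundle $\mathcal G^Y_Y$.
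Third, in general one inducts on the number of orbit sizes. The points of maximal orbit size form an open invariant set $U$ (Lemma~\ref{l:lower.semi}), and $\mathcal G^U_U$ is a directed union of clopen pieces of constant orbit size. The long exact sequence of Theorem~\ref{t:exact.seq} then glues $\mathcal G^U_U$ and $\mathcal G^C_C$.

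Your passage to directed unions is fine; the paper uses continuity of homology instead.

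\textbf{Necessity.}

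\emph{Side correction.} For the right module $R\mathcal G/\ker\sour_*$, which already presupposes $\mathcal G^0$ compact, Villamayor gives $a=ba$ with $b\in\ker\sour_*$, not $a=ab$.

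\emph{Why the route for (1) fails.} Such a $b$ can be changed by any $c\in\ker\sour_*$ with $ca=0$. For example, $c=1_W-1_{W\inv W}$ works for any compact bisection $W$ with $\sour(W)$ disjoint from $\ran(U)\cup\sour(U)$. So the $V_i$ are essentially arbitrary. Proving that they generate a quasi-compact groupoid together with $U$ is as hard as the original claim.

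\emph{The missing idea is to localize first.} Let $\mathcal H=\bigcup S$, where $S$ is generated by finitely many compact bisections $U_1,\dots,U_k$.
Then $\mathcal H^0$ is compact, and $R\mathcal H^0$ is flat over $R\mathcal H$ by Corollaries~\ref{c:shapiro.cor} and~\ref{c:flat.rg}.
By Lemma~\ref{l:generating.set}, $\ker\sour_*$ is generated as a right ideal by the elements $1_{U_j}-1_{U_j\inv U_j}$. Hence $R\mathcal H^0$ is finitely presented, and therefore projective.
This yields $f\in R\mathcal H$, a combination of $r$ compact bisections, with $f\ker\sour_*=0$ and $\sour_*(f)=1_{\mathcal H^0}$.
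It follows that $f(ab)=f(a)$ whenever $\sour(a)=\ran(b)$. Choosing $a$ with $\sour(a)=x$ and $f(a)\neq 0$ gives $|\ran\inv(x)|\le r$.
Uniform boundedness makes $S$ finite (Proposition~\ref{p:uniformly.bdd.vs.local.finite}), so $\mathcal H$ is quasi-compact.
Grouping the sum $\sour_*(f)(x)=1$ into right $\mathcal H^x_x$-cosets gives $|\mathcal H^x_x|\sum_t f(t)=1$. Since every finite subgroup of an isotropy group lies in such an $\mathcal H$, this also yields (2).

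\emph{Your route to (2).} It relies on $H_n(\mathcal G,\Ind M)\cong H_n(\mathcal G^x_x,M)$. But $\mathcal G^x_x$ is not an open subgroupoid, so the available Shapiro lemma does not apply. The point version is true: $R\sour\inv(x)$ is the direct limit of the projectives $R\mathcal G1_V$ over compact open $V\ni x$. As written, though, it is an unproved ingredient.
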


The paper is organized as follows.  After a preliminary section on ample groupoids and their algebras, we recall the definition of the homology of a groupoid with coefficients in a module and Shapiro's lemma.  We establish continuity of homology with respect to direct limits in this context, as well as discussing Morita invariance of homology with coefficients. A long exact sequence in homology associated to an open invariant subspace of the unit space is proved. This theory is then applied to compact principal ample groupoids, exploiting their Morita equivalence to a space~\cite{GPS}, to compute their homology.   The next section brings everything together to prove Theorem~A based on an idea of the author and van Wyk from their characterization of ample groupoids with a von Neumann regular algebra~\cite{regulargpd}.
The final section proves Theorem~B by analyzing the structure of quasi-compact groupoids. 

\subsection*{Acknowledgments}
An email discussion with Christian B\"onicke first suggested a connection between AF groupoids and the results of~\cite{regulargpd}.

\section{Ample groupoids and their algebras}
Following Bourbaki~\cite{Bourbakitop1}, we call a space $X$ \emph{quasi-compact}  if each open covering of $X$ has a finite subcover and reserve the term \emph{compact} for quasi-compact Hausdorff spaces.  

An \emph{ample groupoid} is a topological groupoid $\mathcal G$ such that the unit space $\mathcal G^0$ is locally compact, Hausdorff and totally disconnected and the source map $\sour\colon \mathcal G\to \mathcal G^0$ is a local homeomorphism.  We do not assume that $\mathcal G$ is Hausdorff. The range map is denoted by $\ran$.

If $X,Y\subseteq \mathcal G^0$, then $\mathcal G_X = \sour\inv(X)$, $\mathcal G^Y = \ran\inv(Y)$ and $\mathcal G^Y_X = \mathcal G^Y\cap \mathcal G_X$.  
An element $g\in \mathcal G$ is \emph{isotropy} if $\sour(g)=\ran(g)$.  The group $\mathcal G^x_x$ is called the \emph{isotropy group} at $x\in \mathcal G^0$.
A \emph{group bundle} is a groupoid consisting solely of %entirely of 
isotropy.

The \emph{orbit} of $x\in \mathcal G^0$ is $\ran(\sour\inv(x))$, i.e., it 
consists of  %all $y\in \mathcal G^0$ 
those $y\in \mathcal G^0$ with an arrow from $x$ to $y$.  
The \emph{orbit space} of $\mathcal G$ is $\mathcal G^0/\mathcal R$ where $\mathcal R$ is the orbit equivalence relation.   
A subset $Y\subseteq \mathcal G^0$ is called \emph{full} if it meets each orbit, that is, $\ran(\mathcal G_Y)=\mathcal G^0$. A subset $X\subseteq \mathcal G^0$ is \emph{invariant} if it is a union of orbits.

 A groupoid is \emph{principal} if it has trivial isotropy.  Note that a principal ample groupoid is Hausdorff.  Indeed, if $g\neq h\in \mathcal G$, then either $\sour(h)\neq \sour(g)$ or $\ran(g)\neq \ran(h)$.  In the former case, we can choose disjoint neighborhoods $U,V$ of $\sour(g)$ and $\sour(h)$ in $\mathcal G^0$, and then $\sour\inv(U)$ and $\sour\inv(V)$ give disjoint neighborhoods of $g,h$.  The same argument works if $\ran(g)\neq \ran(h)$.

An ample groupoid is said to be \emph{AF} if it is a directed union of compact open principal subgroupoids~\cite{DeaconuAFdef}.  Note that an $\mathrm{AF}$ groupoid is principal and hence Hausdorff.  AF groupoids were extensively studied in~\cite{renaultaf,GPS,Matui}.  

%\begin{Prop}\label{p:directed.af}
%    A  directed union of open AF subgroupoids is AF.
%\end{Prop}
%\begin{proof}
%    Suppose $\mathcal G=\bigcup_{i\in D}\mathcal G_i$ is a directed union of AF subgroupoids and $\mathcal G_i=\bigcup_{j\in D_i}\mathcal G_{i,j}$ is a directed union of compact open principal subgroupoids of $\mathcal G_i$.  Then $\mathcal G=\bigcup_{i\in D}\bigcup_{j\in D_i}\mathcal G_{i,j}$ is a union of compact open principal subgroupoids.  The union is directed because if $\mathcal G_{i,j}$ and $\mathcal G_{k,\ell}$ are given, we can first find $\mathcal G_a$ containing $\mathcal G_i\cup \mathcal G_k$.  Then by compactness of $\mathcal G_{i,j}\cup \mathcal G_{k,\ell}$, we can find $b\in D_a$ with $\mathcal G_{i,j}\cup \mathcal G_{k,\ell}\subseteq \mathcal G_{a,b}$.  Therefore, the union is directed. 
%\end{proof}

The definition of an AF groupoid in~\cite{GPS} looks different than the one we use.  The authors of~\cite{GPS} define an ample groupoid to be AF if it is principal, and it is a directed union of open subgroupoids $\mathcal H_i$ such that $\mathcal G^0\subseteq \mathcal H_i$ and $\mathcal H_i\setminus \mathcal G^0$ is compact.  These two definitions are readily seen to be equivalent. If $\mathcal G=\bigcup \mathcal H_i$ is a directed union of compact open principal groupoids, then $\mathcal G$ is principal, and putting $\mathcal H_i'=\mathcal G^0\cup \mathcal H_i$, we have $\mathcal G=\bigcup \mathcal H_i'$ where each $\mathcal H_i'$ is open, contains $\mathcal G^0$ and $\mathcal H_i'\setminus \mathcal G^0=\mathcal H_i\setminus \mathcal H_i^0$ is compact.  Conversely, it is shown in~\cite[Lemma~3.4]{GPS} that if $\mathcal H$ is principal and $\mathcal H\setminus \mathcal H^0$ is compact, then $\mathcal H$ is a disjoint union $Y\sqcup \mathcal H'$ of two clopen subgroupoids  where $Y$ is a locally compact totally disconnected space and $\mathcal H'$ is a compact principal groupoid.  Clearly, $\mathcal H$ is then AF in the sense we are using, as $Y$ is a directed union of compact open subsets.   Since a directed union of open AF subgroupoids in our sense is clearly AF, an
 AF groupoid in the sense of~\cite{GPS} is AF in ours.

A \emph{bisection} of $\mathcal G$ is an open\footnote{Some authors do not require bisections to be open.} subset $U$ such that $\sour|_U,\ran|_U$ are injective.  The compact bisections form a basis for the topology on an ample groupoid $\mathcal G$ and they also form an inverse semigroup under setwise product with pointwise inverse for the involution. The reader is referred to~\cite{Lawson} for  inverse semigroups.

Following~\cite{regulargpd} we call an ample groupoid $\mathcal G$ \emph{uniformly bounded} if there is an integer $M\geq 1$ with $|\ran^{-1}(x)|\leq M$ for all $x\in \mathcal G^0$; note that $M$ bounds the size of each isotropy group and orbit of $\mathcal G$.  Every quasi-compact groupoid is uniformly bounded~\cite[Lemma~3.3]{regulargpd}. If $\mathcal G$ is a quasi-compact groupoid, the unit space $\mathcal G^0=\sour(\mathcal G)$ is compact. 

%or, equivalently, $|\sour^{-1}(x)|\leq M$ for all $x\in \mathcal G^0$.  
%One says that $\mathcal G$ is \emph{approximately uniformly bounded} if it is a directed union of uniformly bounded open subgroupoids. % and that it is \emph{approximately quasi-compact} if it is a directed union of quasi-compact open subgroupoids. 

The following proposition is part of~\cite[Proposition~3.4]{regulargpd}.

\begin{Prop}\label{p:uniformly.bdd.vs.local.finite}
 The following are equivalent for an ample groupoid $\mathcal G$.
\begin{enumerate}
  \item $\mathcal G$ is a directed union of quasi-compact open subgroupoids.
  \item $\mathcal G$ is a directed union of uniformly bounded open subgroupoids.
  \item The inverse semigroup of compact bisections of $\mathcal G$ is locally finite.
\end{enumerate}
\end{Prop}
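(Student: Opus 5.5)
We will prove the cycle of implications (1)$\Rightarrow$(2)$\Rightarrow$(3)$\Rightarrow$(1). For (1)$\Rightarrow$(2), we use that every quasi-compact ample groupoid is uniformly bounded~\cite[Lemma~3.3]{regulargpd}. So a directed union of quasi-compact open subgroupoids is already a directed union of uniformly bounded open subgroupoids.

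For (2)$\Rightarrow$(3), let $F=\{U_1,\dots,U_n\}$ be a finite set of compact bisections. Each $U_i$ is quasi-compact, so the directedness of $\mathcal G=\bigcup \mathcal G_i$ together with openness of the $\mathcal G_i$ gives a single uniformly bounded open subgroupoid $\mathcal H$ containing all the $U_i$. Let $M$ bound $|\ran^{-1}(x)|$ in $\mathcal H$. Every element of the inverse subsemigroup generated by $F$ is a compact bisection $V\subseteq\mathcal H$ of the form $U_{i_1}^{\pm1}\cdots U_{i_k}^{\pm1}$. Such a $V$ is a union of its pieces $V\cap \sour\inv(x)$, each of which has at most one point. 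Such a bisection is determined by the partial map $\sour(V)\to\mathcal H$, and that map is assembled from finitely many clopen pieces.

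The key finiteness claim is the following. The unit space of $\mathcal H$ restricted to the compact set $K=\bigcup_i(\sour(U_i)\cup\ran(U_i))$ can be partitioned into finitely many compact open sets $E_1,\dots,E_m$. On each $E_j$ and for each word, the arrow chosen at $x$ lies in $\ran\inv$ of an orbit of size $\le M$ and in a fibre of size $\le M$. We refine the partition, using the Boolean algebra generated by the $\sour(U_i)$, $\ran(U_i)$ and their images under the finitely many partial homeomorphisms. Since orbits in $\mathcal H$ have size $\le M$, iterating the partial homeomorphisms $\theta_{U_i}$ on $K$ produces only boundedly many distinct sets, so this Boolean algebra is finite. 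Moreover, the $\ran$-fibres have size $\le M$, so over each atom the germs realized by words form a finite set. We expect this to be the main obstacle: making precise that the words yield finitely many distinct bisections. The approach is to show that each word restricted to an atom $E$ is one of at most $M$ sections. To do this, we use that $U_{i}$ restricted to an atom is a single compact bisection. The set of arrows of $\mathcal H$ with source in the compact open atom $E$ and range in a fixed atom is quasi-compact and covered by the finitely many products. Hence $V$ is a union over finitely many atoms of choices from a finite set, and so only finitely many $V$ arise.

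For (3)$\Rightarrow$(1), for each finite set $F$ of compact bisections let $S_F$ be the finite inverse semigroup it generates, and put $\mathcal G_F=\bigcup S_F$. This is an open subset closed under product and inverse, and it is a finite union of compact bisections, hence quasi-compact. We then replace $\mathcal G_F$ by $\mathcal G_F\cup\sour(\mathcal G_F)$, which still lies in $\mathcal G_F$ since $U^{-1}U\in S_F$; so $\mathcal G_F$ is a quasi-compact open subgroupoid. These subgroupoids are directed under inclusion of $F$. Their union is $\mathcal G$ because compact bisections cover $\mathcal G$.
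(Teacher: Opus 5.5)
Your implications (1)$\Rightarrow$(2) and (3)$\Rightarrow$(1) are fine. The paper gives no proof of this proposition; it quotes \cite[Proposition~3.4]{regulargpd}. The genuine gap is (2)$\Rightarrow$(3), at exactly the step you flag.

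Your plan partitions $K$ using the Boolean algebra generated by domains and images of words, and then claims each word restricted to an atom is one of at most $M$ sections. This cannot work. Take $\mathcal H=X\times\mathbb Z/2$ over a Cantor set $X$, so $M=2$. Put $U_j=(A_j\times\{1\})\cup((X\setminus A_j)\times\{0\})$ for clopen sets $A_1,A_2$ in general position. Every word has domain and range $X$, so there is a single atom. Yet $U_1$, $U_2$, $U_1U_2$ and $X\times\{0\}$ are four distinct sections over it. Domains and images do not detect which arrow of a fibre a word selects.

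Two further claims also fail. First, that the arrows of $\mathcal H$ with source and range in given compact open atoms form a quasi-compact set. This is false when $\mathcal H$ is only uniformly bounded: over $\mathbb N\cup\{\infty\}$, add an isolated element of order $2$ at each $n\in\mathbb N$. If you meant $\bigcup T$ instead, the claim presupposes what you are proving. Second, the finiteness of the Boolean algebra of iterated images is asserted, not proved.

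The missing idea is a pointwise counting argument, which needs no topology at all. Fix $x\in\mathcal H^0$ and set $\mathcal H_x=\sour\inv(x)\cap\mathcal H$. This set has at most $M$ elements, since inversion swaps $\sour$- and $\ran$-fibres. Each compact bisection $V\subseteq\mathcal H$ acts on $\mathcal H_x$ by the partial injection $g\mapsto vg$, where $v\in V$ is the element with $\sour(v)=\ran(g)$. This gives a homomorphism from the inverse semigroup $T$ generated by $F=\{U_1,\dots,U_n\}$ into $\prod_x I(\mathcal H_x)$. The map is injective because $V\cap\sour\inv(x)$ is the image of the unit $x$ under this action. Fix bijections of each $\mathcal H_x$ with a subset of $\{1,\dots,M\}$ and let $I_M$ be the symmetric inverse monoid. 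Then $T$ embeds in $I_M^{\mathcal H^0}$ as the inverse semigroup generated by the images $\bar U_1,\dots,\bar U_n$ of the $U_i$. The $x$-coordinate of a word $w(\bar U_1,\dots,\bar U_n)$ is $w$ evaluated at the tuple $(\bar U_1(x),\dots,\bar U_n(x))\in I_M^n$. So each element of $T$ is determined by a function $I_M^n\to I_M$, and $|T|\le |I_M|^{|I_M|^n}$. Equivalently, the variety generated by the finite inverse semigroup $I_M$ is locally finite.
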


If $X$ is a topological space with a basis of compact open sets and $R$ is a unital ring, then $RX$ is the left $R$-submodule of $R^X$ generated by the characteristic functions $1_U$ of compact open subsets $U$ of $X$; in fact, $RX$ is naturally an $R$-bimodule.  When $X$ is Hausdorff, this is the collection of locally constant mappings $f\colon X\to R$ with compact support.

If $\p\colon X\to Y$ is a local homeomorphism, then $\p_*\colon RX\to RY$ defined by 
\[\p_*(f)(y)=\sum_{\p(x)=y}f(x)\]
is a homomorphism. 

If $\mathcal G$ is an ample groupoid,  $R\mathcal G$ is a ring~\cite{mygroupoidalgebra} with respect to convolution 
\[f\ast h(g) = \sum_{\ran(g)=\ran(a)}f(a)h(a\inv g).\] 
We often just write $fh$ for the product. 
%There is an involution on $\mathbb Z\mathcal G$, fixing $\mathbb Z\mathcal G^0$, given by $f^*(g) = f(g\inv)$.
%Therefore, any right $\mathbb Z\mathcal G$-module $M$ can be made into a left module via $fm=mf^*$.
As a left $R$-module $R\mathcal G$ is spanned by the characteristic functions of compact bisections~\cite{mygroupoidalgebra}.   

It was proved by Li~\cite[Corollary~2.3]{XLi} that $R\otimes_{\mathbb Z}\mathbb ZX\cong RX$  via the multiplication map for any space $X$, and hence $R\otimes_{\mathbb Z}\mathbb Z\mathcal G\cong R\mathcal G$ as rings.  In particular, an $R\mathcal G$-module is the same thing as a $\mathbb Z\mathcal G$-module with an $R$-module structure that commutes with the $\mathbb Z\mathcal G$-module structure.

We shall need later the following technical result.

\begin{Prop}\label{p:generate}
    Let $S$ be an inverse semigroup of compact bisections of $\mathcal G$ such that $\mathcal G=\bigcup S$.   Then  $R\mathcal G$ is spanned by the characteristic functions $1_V$ with $V$ compact open and $V\subseteq U$ for some $U\in S$.  Moreover, if $S$ is finitely generated, then $\mathcal G^0$ is compact. 
\end{Prop}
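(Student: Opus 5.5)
Since $R\mathcal G$ is spanned over $R$ by characteristic functions $1_B$ of compact bisections $B$, the first claim reduces to showing that each such $1_B$ lies in the $R$-span of the functions $1_V$ with $V$ compact open and contained in some member of $S$. Fix a compact bisection $B$. Because $\mathcal G=\bigcup S$ and the members of $S$ are open, $B$ is covered by finitely many $U_1,\dots,U_n\in S$. Each $B\cap U_i$ is open in $B$, and $B$ is homeomorphic through $\sour$ to the compact Hausdorff totally disconnected set $\sour(B)\subseteq\mathcal G^0$. I therefore work inside $B$, where Hausdorffness is available even if $\mathcal G$ is not Hausdorff. I refine the finite open cover $\{B\cap U_i\}$ of the compact, Hausdorff, zero-dimensional space $B$ into finitely many pairwise disjoint compact open subsets $V_1,\dots,V_m$ of $B$, with each $V_j$ contained in some $U_{i(j)}$. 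Concretely, each point has a compact open neighborhood in $B$ inside some $B\cap U_i$; finitely many of these cover $B$; and I disjointify them by successive differences, which remain compact open because $B$ is Hausdorff. Each $V_j$ is open in $B$, hence open in $\mathcal G$. It is compact, so it is a compact open subset of $\mathcal G$ lying in a member of $S$. Since the $V_j$ partition $B$, we get $1_B=\sum_j 1_{V_j}$, and this proves spanning.

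The only delicate point is that compactness of $V_j$ and the disjointification must take place inside $B$, not inside the possibly non-Hausdorff $\mathcal G$. Compactness of a subset of $B$ is intrinsic, however, and openness in $B$ implies openness in $\mathcal G$ because $B$ is open. So no difficulty arises.

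For the second claim, suppose $S$ is generated by finitely many elements $s_1,\dots,s_k$. Every element of $S$ is a product of the $s_i$ and their inverses. The source of a product satisfies $\sour(UV)\subseteq \sour(V)$, and $\sour(s_i^{-1})=\ran(s_i)$. Hence every $U\in S$ has $\sour(U)\subseteq K:=\bigcup_i(\sour(s_i)\cup\ran(s_i))$. The set $K$ is a finite union of compact sets (continuous images of the compact $s_i$), so it is compact. Since $\mathcal G=\bigcup S$, we have $\mathcal G^0=\sour(\mathcal G)=\bigcup_{U\in S}\sour(U)\subseteq K$. Thus $\mathcal G^0=K$ is compact, being Hausdorff by assumption.
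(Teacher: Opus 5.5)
Your proof is correct. For the spanning statement you take a more elementary route than the paper. The paper first reduces to $R=\mathbb Z$ using $R\mathcal G\cong R\otimes_{\mathbb Z}\mathbb Z\mathcal G$. It then notes that the inverse semigroup $T$ of compact bisections contained in members of $S$ is a basis for the topology of $\mathcal G$, so the idempotents of $T$ generate the Boolean algebra of compact open subsets of $\mathcal G^0$. Finally it quotes the general spanning criterion \cite[Proposition~4.14]{mygroupoidalgebra}. You instead start from the already-quoted fact that compact bisections span $R\mathcal G$. You then cut each compact bisection $B$ into finitely many pairwise disjoint compact open pieces, each inside a member of $S$. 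All the point-set work happens inside $B\cong\sour(B)$, which is compact, Hausdorff and zero-dimensional even when $\mathcal G$ is not Hausdorff. This makes your argument self-contained apart from that one background fact. It also works directly over an arbitrary unital $R$ with no reduction to $\mathbb Z$, and gives the explicit identity $1_B=\sum_j 1_{V_j}$. The paper's route is shorter and is phrased through the inverse semigroup $T$, which is the form reused in Lemma~\ref{l:generating.set}; your argument delivers the same conclusion, that the $1_V$ with $V\in T$ span. For compactness of $\mathcal G^0$, your argument is essentially the paper's, mirrored. The paper sets $U=\bigcup_{V\in X\cup X^{-1}}VV^{-1}$ and shows $UV=V$ for all $V\in S$, hence $\ran(V)\subseteq U$. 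You instead bound $\sour$ of a product by $\sour$ of its last factor. Your $K$ is exactly the same set as the paper's $U$.
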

\begin{proof}
Using that $R\mathcal G=R\otimes_{\mathbb Z}\mathbb Z\mathcal G$, it suffices to prove the first statement for $R=\mathbb Z$.
Let $T$ be the inverse semigroup of compact bisections contained in an element of $S$.   It is  a basis for the topology of $\mathcal G$.  Indeed,  if $U$ is an open subset of $\mathcal G$ and $g\in U$, then there is $V\in S$ with $g\in V$.  Then $V\cap U$ is an open neighborhood of $g$, and so we can find $g\in K\subseteq U\cap V$ with $K$ compact open.  But then  $K\in T$.  In particular, since $\mathcal G^0$ is open, it has a basis of elements of  $T$.  Such elements are idempotents of $T$, and so the idempotents of $T$ generate the boolean algebra of compact open subsets of $\mathcal G^0$.  The first statement now follows from~\cite[Proposition~4.14]{mygroupoidalgebra}.

For the final statement, let $X$ be a finite generating set for $S$ and put $U = \bigcup_{V\in X\cup X\inv}VV\inv$.  Note that $U\subseteq \mathcal G^0$  is compact open. Then if $V\in S$, we have that $V=V_1\cdots V_n$ with $V_1,\ldots, V_n\in X\cup X\inv$.  Then $UV_1=V_1$, and so $UV=V$.  Now if $x\in \mathcal G^0$, then $x\in V$ for some $V\in S$, and so $x\in UV$.  It follows that $x\in U$, and so $\mathcal G^0\subseteq U$.  Thus $\mathcal G^0=U$ is compact.
\end{proof}

If $X$ is a space with a basis of compact open sets, $R$ is a unital ring, $U\subseteq X$ is open and $C=X\setminus U$, then there is an exact sequence
\[0\to RU\to RX\to RC\to 0\] where the first map is extension by $0$ and the second map is restriction; see~\cite[Proposition~4.3]{millersteinberg}.   Moreover, if $\mathcal G$ is an ample groupoid, $U\subseteq \mathcal G^0$ is an open invariant subset and $C=\mathcal G^0\setminus U$, then $R\mathcal G_U^U$ is an ideal of $R\mathcal G$ and $R\mathcal G^C_C\cong R\mathcal G/R\mathcal G_U^U$ via the restriction map $R\mathcal G\to R\mathcal G^C_C$; see~\cite[Corollary~4.4]{millersteinberg}.

%In fact, if $S$ is the inverse semigroup of compact bisections of $\mathcal G$, then $R\mathcal G\cong RS/I$ via $1_U\mapsfrom U$, where $I$ is the ideal generated by all elements of the form $U+V-U\cup V$ with $U,V\subseteq \mathcal G^0$ compact open and disjoint~\cite[Corollary~2.15]{simplicity} (which is fully justified in~\cite[Corollary~4.4]{millersteinberg}).

%\section{Groupoid algebras}

\section{Homology of groupoids}
Let $\mathcal G$ be an ample groupoid and $R$ a unital ring. A left $R\mathcal G$-module $M$ is \emph{unitary} if $R\mathcal G\cdot M=M$, and dually for right modules.  The unitary  $R\mathcal G$-modules form an abelian category with enough projectives. 

The trivial right $R\mathcal G$-module is $R\mathcal G^0$ with module action 
\[hf(x) = \sum_{\sour(g)=x}h(\ran(g))f(g)\] for $f\in R\mathcal G$ and $h\in  R\mathcal G^0$.  The trivial left $R\mathcal G$-module is $R\mathcal G^0$ with the left action defined dually. These modules are both unitary.

If $M$ is a unitary left $\mathbb Z\mathcal G$-module, then the homology of $\mathcal G$ with coefficients in $M$ is defined as 
\[H_n(\mathcal G,M) = \Tor_n^{\mathbb Z\mathcal G}(\mathbb Z\mathcal G^0,M).\]
One sets $H_n(\mathcal G) = H_n(\mathcal G,\mathbb Z\mathcal G^0)=\Tor_n^{\mathbb Z\mathcal G}(\mathbb Z\mathcal G^0,\mathbb Z\mathcal G^0)$.

The \emph{homological dimension} of $\mathcal G$ is $\sup\{n\mid H_n(\mathcal G,-)\neq 0\}$.   Equivalently, it is the shortest length of a flat resolution (by unitary modules) of the right $\mathbb Z\mathcal G$-module $\mathbb Z\mathcal G^0$~\cite[Proposition~8.17]{Rotmanhom}.  In particular, the homological dimension of $\mathcal G$ is $0$ if and only if $\mathbb Z\mathcal G^0$ is flat.  More generally, the \emph{$R$-homological dimension} of $\mathcal G$ is the supremum of the integers $n$ such that $H_n(\mathcal G,M)\neq 0$ for a unitary $R\mathcal G$-module $M$.  We shall see later that this is the shortest length of a flat resolution of the right $R\mathcal G$-module $R\mathcal G^0$.

There is a standard flat resolution $B_\bullet(\mathcal G)$ of the right $\mathbb Z\mathcal G$-module $\mathbb Z\mathcal G^0$, known as the bar resolution.   Consider the space \[\mathcal G^{n}=\mathcal G\times_{\mathcal G^0}\overbrace{
\cdots}^{\times n} \times_{\mathcal G^0}\mathcal G.\]  
%The right module $\mathbb Z\mathcal G^{n+1}\cong \mathbb Z\mathcal G^n\otimes_{\mathbb Z\mathcal G^0}\mathbb Z\mathcal G$ (see~\cite[Proposition~2.9]{AMiller}) is flat for $n\geq 0$~\cite[Proposition~2.8]{AMiller}, 
The right module $\mathbb Z\mathcal G^{n+1}$ is flat for $n\geq 0$~\cite[Proposition~2.4]{XLi} where the module structure is given for $f\in \mathbb Z\mathcal G$ and $h\in \mathbb Z\mathcal G^{n+1}$ by 
\[hf(g_0,\ldots, g_n)=\sum_{\ran(a)=\ran(g_n)}h(g_0,\ldots, g_{n-1},a)f(a\inv g_n).\]
Moreover, we have a flat resolution
\[B_\bullet(\mathcal G):\quad\cdots\to \mathbb Z\mathcal G^2\to \mathbb Z\mathcal G^1\to \mathbb Z\mathcal G^0\to 0\]
where the boundary map $d_n\colon \mathbb Z\mathcal G^{n+1}\to \mathbb Z\mathcal G^n$ is defined as follows. First of all $d_0=\sour_*$. For $n\geq 1$, \[d_n = \sum_{i=0}^n(-1)^i(d_i^n)_*\] where $d_i^n\colon \mathcal G^{n+1}\to \mathcal G^n$ is the local homeomorphism given by 
\[d_i^n(g_0,\ldots,g_n)=\begin{cases}(g_0,\ldots, g_{i-1}g_i,\ldots, g_n), & \text{if}\ 1\leq i\leq n\\ (g_1,\ldots, g_n), & \text{if}\ i=0.\end{cases}\] Note that the $d_i^n$ are $\mathcal G$-equivariant maps of right $\mathcal G$-spaces, as is $\sour\colon \mathcal G^1\to \mathcal G^0$.  Therefore, they induce right module homomorphisms. %The maps $(-1)^n(h_n)_*$ induce a contracting homotopy at the level of abelian groups where $h_0\colon \mathcal G^0\to \mathcal G$ is the inclusion and $h_n\colon \mathcal G^n\to \mathcal G^{n+1}$ is the local homeomorphism given by $(g_0,\ldots,g_n)\mapsto (g_0,\ldots,g_n,\sour(g_n))$ for $n\geq 1$. 
See~\cite[Example~2.14]{AMiller} for more details, where left modules are considered.

If $\mathcal H$ is an open subgroupoid of $\mathcal G$, then any unitary left $\mathbb Z\mathcal G$-module $M$ can be restricted to a unitary $\mathbb Z\mathcal H$-module $\mathbb Z\mathcal H\cdot M$.  There is a left adjoint to restriction called induction.  If $M$ is a unitary left $\mathbb Z\mathcal H$-module, then $\Ind_\mathcal H^\mathcal G M = \mathbb Z\mathcal G_{\mathcal H^0}\otimes_{\mathbb Z\mathcal H}M$.  Miller~\cite[Lemma~2.19]{AMiller} proves an analogue of Shapiro's lemma in this context.

\begin{Lemma}[Shapiro's Lemma]   
Let $\mathcal G$ be an ample groupoid and $\mathcal H$ an open subgroupoid.  Then, for any unitary $\mathbb Z\mathcal H$-module $M$, there is an isomorphism
\[H_n(\mathcal G,\Ind_\mathcal H^\mathcal G M)\cong H_n(\mathcal H,M)\]
of abelian groups.
\end{Lemma}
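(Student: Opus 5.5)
The plan is to compute both sides from one flat resolution. I would take the bar resolution $B_\bullet(\mathcal G)$ of $\mathbb Z\mathcal G^0$ over $\mathbb Z\mathcal G$ and tensor it with $\Ind_\mathcal H^\mathcal G M=\mathbb Z\mathcal G_{\mathcal H^0}\otimes_{\mathbb Z\mathcal H}M$. Associativity of the tensor product gives
\[B_\bullet(\mathcal G)\otimes_{\mathbb Z\mathcal G}\Ind_\mathcal H^\mathcal G M\cong \bigl(B_\bullet(\mathcal G)\otimes_{\mathbb Z\mathcal G}\mathbb Z\mathcal G_{\mathcal H^0}\bigr)\otimes_{\mathbb Z\mathcal H}M.\]
So it suffices to show that $C_\bullet=B_\bullet(\mathcal G)\otimes_{\mathbb Z\mathcal G}\mathbb Z\mathcal G_{\mathcal H^0}$ is a flat resolution of the trivial right $\mathbb Z\mathcal H$-module $\mathbb Z\mathcal H^0$ by unitary modules. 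Tensoring $C_\bullet$ with $M$ then computes $\Tor^{\mathbb Z\mathcal H}_n(\mathbb Z\mathcal H^0,M)=H_n(\mathcal H,M)$.

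The first step is to identify the terms. Since $B_\bullet(\mathcal G)$ consists of unitary right modules and $\mathbb Z\mathcal G_{\mathcal H^0}=\mathbb Z\mathcal G\cdot 1_{\mathcal H^0}$ in a suitable sense, tensoring with it amounts to restricting the last coordinate to have source in $\mathcal H^0$. Concretely:
\begin{enumerate}
\item $\mathbb Z\mathcal G^{n+1}\otimes_{\mathbb Z\mathcal G}\mathbb Z\mathcal G_{\mathcal H^0}\cong \mathbb Z\bigl(\mathcal G^{n+1}\bigr)_{\mathcal H^0}$, meaning functions on tuples $(g_0,\dots,g_n)$ with $\sour(g_n)\in\mathcal H^0$;
\item in degree $-1$, $\mathbb Z\mathcal G^0\otimes_{\mathbb Z\mathcal G}\mathbb Z\mathcal G_{\mathcal H^0}\cong \mathbb Z\mathcal H^0$.
\end{enumerate}
To verify (2), I would use that $\mathbb Z\mathcal G^0\otimes_{\mathbb Z\mathcal G}\mathbb Z\mathcal G\cong\mathbb Z\mathcal G^0$ for unitary modules, and then cut down by the idempotent-type projection to $\mathcal H^0$ via local units.

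The second step is exactness. The complex $C_\bullet$ is still exact, because the contracting homotopy of the bar resolution as abelian groups can be chosen compatible with restriction to $\sour(g_n)\in\mathcal H^0$. Alternatively, $\mathbb Z\mathcal G_{\mathcal H^0}$ is a left flat $\mathbb Z\mathcal G$-module, being a direct summand-like colimit of modules $\mathbb Z\mathcal G 1_U$ with $U\subseteq\mathcal H^0$ compact open, each a direct summand of $\mathbb Z\mathcal G$. With this route, tensoring the exact augmented bar complex preserves exactness.

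The third step, which I expect to be the main obstacle, is flatness of each $\mathbb Z(\mathcal G^{n+1})_{\mathcal H^0}$ as a right $\mathbb Z\mathcal H$-module. Here $\mathcal H$ acts on the right of the last coordinate, a free proper-type action of an open subgroupoid. I would mimic the argument of \cite[Proposition~2.4]{XLi}. The space of tuples is a disjoint union of compact open pieces $K$ such that the map $K\times_{\mathcal H^0}\mathcal H\to$ tuples is injective. Via a Boolean-algebra filtration, this decomposes the module as a directed colimit of modules isomorphic to finite direct sums of modules of the form $1_U\mathbb Z\mathcal H$ with $U\subseteq\mathcal H^0$ compact open. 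Those are projective, since $1_U$ is idempotent, and directed colimits of flat modules are flat. Unitarity is clear from local units.

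Combining the three steps yields $H_n(\mathcal G,\Ind_\mathcal H^\mathcal G M)\cong H_n(C_\bullet\otimes_{\mathbb Z\mathcal H}M)\cong H_n(\mathcal H,M)$.
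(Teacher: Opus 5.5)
Your first two steps are fine, and restricting the bar resolution of $\mathcal G$ along $\mathbb Z\mathcal G_{\mathcal H^0}$ is the standard route to Shapiro's lemma. The paper does not prove the lemma itself; it imports it from Miller~\cite[Lemma~2.19]{AMiller}.

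\textbf{The gap is in your third step.} We have $C_n\otimes_{\mathbb Z\mathcal H}N\cong B_n(\mathcal G)\otimes_{\mathbb Z\mathcal G}\Ind_{\mathcal H}^{\mathcal G}N$ and $C_0\cong\mathbb Z\mathcal G_{\mathcal H^0}$. So what you need is exactly that $\mathbb Z\mathcal G_{\mathcal H^0}$ is a flat right $\mathbb Z\mathcal H$-module, i.e.\ that induction is exact. Your proposed mechanism does not give this.

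In \cite[Proposition~2.4]{XLi} the action on $\mathcal G^{n+1}=\mathcal G^n\times_{\mathcal G^0}\mathcal G$ has a global slice. For the right action of an open subgroupoid $\mathcal H$, however, the orbit space need not be Hausdorff. Take compact open bisections $V,V'\subseteq\mathcal G_{\mathcal H^0}$. The overlap $V\mathcal H\cap V'\mathcal H$ corresponds to the set of $y\in\ran(V)\cap\ran(V')$ with $v^{-1}v'\in\mathcal H$, where $v\in V$ and $v'\in V'$ are the arrows with range $y$. This set is open but typically not compact when $\mathcal H$ is not closed, so no Boolean refinement can make the saturations disjoint.

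In fact the decomposition you claim can fail outright. Let $\mathcal G=C\times\{1,2\}^2$, with $C$ the Cantor set and $\{1,2\}^2$ the pair groupoid. Let $O\subseteq C$ be open but not closed, and put $\mathcal H=\mathcal G^0\cup(O\times\{(1,2),(2,1)\})$.
\begin{itemize}
\item $C_0=\mathbb Z\mathcal G$ is a finitely generated right $\mathbb Z\mathcal H$-module.
\item The functions supported on the row $C\times\{(1,1),(1,2)\}$ form a direct summand, and $\sour$ identifies this summand with the trivial module $\mathbb Z\mathcal H^0$.
\item $\mathbb Z\mathcal H^0$ is not projective. Arguing as in the proof of Lemma~\ref{l:isotropy.control}, a splitting of $\sour_*$ would give $f\in\mathbb Z\mathcal H$ with $f(c,(1,1))=1$ for $c\notin O$ and $f(c,(1,2))=f(c,(1,1))$ for $c\in O$. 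This is incompatible with $f$ being locally constant at a point of $\overline O\setminus O$ and having compact support on $O\times\{(1,2)\}$.
\end{itemize}
Hence $\mathbb Z\mathcal G$ is not a directed union of submodules that are finite direct sums of modules $1_U\mathbb Z\mathcal H$. Being finitely generated, it would equal one of them and so be projective, contradicting the non-projective direct summand.

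The module is nevertheless flat here, since $\mathcal H$ is AF. Flatness holds in general, but the argument must handle the overlaps rather than eliminate them. Here is one way, at least for Hausdorff $\mathcal G$; the non-Hausdorff case needs more care.
\begin{itemize}
\item Each $P=1_V\mathbb Z\mathcal H$ is isomorphic to $1_{\sour(V)}\mathbb Z\mathcal H$, hence projective.
\item If $Q$ is a finite sum of such modules, then $Q\cap P$ corresponds under this isomorphism to $\mathbb Z\mathcal H^{U'}$ for some open $U'\subseteq\sour(V)$. This is the directed union of the $1_{U''}\mathbb Z\mathcal H$ with $U''\subseteq U'$ compact open.
\item So $(Q\cap P)\otimes_{\mathbb Z\mathcal H}N\to P\otimes_{\mathbb Z\mathcal H}N$ is the inclusion $\bigcup_{U''}1_{U''}N\subseteq 1_{\sour(V)}N$, which is injective.
\item The Mayer--Vietoris sequence $0\to Q\cap P\to Q\oplus P\to Q+P\to 0$ then gives $\Tor_1^{\mathbb Z\mathcal H}(Q+P,N)=0$ by induction on the number of summands.
\item $\mathbb Z\mathcal G_{\mathcal H^0}$ is the directed union of such finite sums, hence flat.
\end{itemize}
A stalkwise check that $\Ind_{\mathcal H}^{\mathcal G}$ is exact would also work. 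With this input, $C_\bullet$ is a flat resolution of $\mathbb Z\mathcal H^0$ by unitary modules, and the rest of your argument goes through.
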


Note that if $M$ is an $R\mathcal H$-module, then $\Ind_{\mathcal H}^{\mathcal G} M$ is an $R\mathcal G$-module via the $R$-module structure on $M$. It immediately follows that the $R$-homological dimension of an open subgroupoid $\mathcal H\subseteq \mathcal G$ cannot exceed that of $\mathcal G$.

\begin{Cor}\label{c:shapiro.cor}
    Let $\mathcal G$ be an ample groupoid and $\mathcal H$ an open subgroupoid.  Then the $R$-homological dimension of $\mathcal H$ is bounded by that of $\mathcal G$.
\end{Cor}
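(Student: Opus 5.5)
The plan is to apply Shapiro's Lemma directly, after checking that induction carries $R\mathcal H$-modules to $R\mathcal G$-modules. Let $d$ denote the $R$-homological dimension of $\mathcal G$. If $d=\infty$ there is nothing to prove, so assume $d<\infty$. We must show that $H_n(\mathcal H,M)=0$ for every $n>d$ and every unitary $R\mathcal H$-module $M$.

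Fix such an $M$. By Li's isomorphism $R\mathcal H\cong R\otimes_{\mathbb Z}\mathbb Z\mathcal H$, $M$ is a unitary $\mathbb Z\mathcal H$-module carrying a commuting $R$-module structure. Form $\Ind_{\mathcal H}^{\mathcal G}M=\mathbb Z\mathcal G_{\mathcal H^0}\otimes_{\mathbb Z\mathcal H}M$. This is a left $\mathbb Z\mathcal G$-module via the left action on the first factor, and it acquires an $R$-action from the right factor, $r(f\otimes m)=f\otimes rm$. This action is well defined because the $R$-action on $M$ commutes with the $\mathbb Z\mathcal H$-action, and it commutes with the $\mathbb Z\mathcal G$-action because the two act on different tensor factors.

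Next we check that $\Ind_{\mathcal H}^{\mathcal G}M$ is unitary over $R\mathcal G$. Every element of $\mathbb Z\mathcal G_{\mathcal H^0}$ is a finite sum of functions $1_U$ with $U$ a compact bisection. Each such $1_U$ equals $1_{\ran(U)}\ast 1_U$, with $1_{\ran(U)}\in\mathbb Z\mathcal G$, so $\mathbb Z\mathcal G\cdot\mathbb Z\mathcal G_{\mathcal H^0}=\mathbb Z\mathcal G_{\mathcal H^0}$, and hence the induced module is unitary. Combining these facts via Li's isomorphism, $\Ind_{\mathcal H}^{\mathcal G}M$ is a unitary $R\mathcal G$-module, as the paper notes just before the statement.

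Shapiro's Lemma now gives $H_n(\mathcal H,M)\cong H_n(\mathcal G,\Ind_{\mathcal H}^{\mathcal G}M)$, and the right-hand side vanishes for $n>d$ by the definition of $d$. Hence the $R$-homological dimension of $\mathcal H$ is at most $d$. The only step needing care is the compatibility of the $R$-structure and unitarity on the induced module; the rest is immediate from the lemma.
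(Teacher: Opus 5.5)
Your proposal is correct and is essentially the paper's argument: the paper likewise observes that $\Ind_{\mathcal H}^{\mathcal G}M$ becomes an $R\mathcal G$-module through the $R$-structure on $M$, and then concludes via Shapiro's Lemma. The only difference is that you spell out the well-definedness of the $R$-action and the unitarity of the induced module, which the paper leaves implicit.
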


The following theorem establishes the continuity of groupoid homology.  Note that if $R=\varinjlim_D R_i$ is a direct limit of rings, and $M=\varinjlim_D M_i$, $N=\varinjlim_D N_i$ with the $M_i$ right $R_i$-modules and the $N_i$ left $R_i$-modules, then $M\otimes_R N\cong \varinjlim _D (M_i\otimes_{R_i} N_i)$.  Indeed, for a ring $S$ and right/left $S$-modules $A,B$, we have that
\[A\otimes_S B=\mathrm{coeq}(A\otimes_{\mathbb Z} S\otimes_{\mathbb Z} B\rightrightarrows  A\otimes_{\mathbb Z} B)\] is the coequalizer\footnote{The coequalizer of two homomorphisms is the cokernel of their difference.} of the maps $a\otimes s\otimes b\mapsto as\otimes b$ and $a\otimes s\otimes b\mapsto a\otimes sb$ induced by the right and left module actions of $S$, and coequalizers commute with direct limits. (Note that the additive group of a direct limit of rings is the direct limit of the additive groups.)  

\begin{Thm}\label{t:dlim.hom}
Suppose that $\mathcal G=\bigcup_{i\in D}\mathcal H_i$ is a directed union of open subgroupoids $\mathcal H_i$.   Then 
\[H_n(\mathcal G,M)\cong \varinjlim_{i\in D}\ H_n(\mathcal H_i,\mathbb Z\mathcal H_i\cdot M)\] for any unitary left $\mathbb Z\mathcal G$-module $M$. In particular, $H_n(\mathcal G) = \varinjlim_D H_n(\mathcal H_i)$.
\end{Thm}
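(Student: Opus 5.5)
Compute both sides with bar resolutions and pass to the limit using the remark just before the statement, that tensor products commute with directed colimits of rings and modules. Since $\mathcal G$ is a directed union of the open subgroupoids $\mathcal H_i$, we have $\mathcal G^0=\bigcup_i\mathcal H_i^0$. The spaces $\mathcal H_i^{n}$ (the composable $n$-tuples in $\mathcal H_i$) are open in $\mathcal G^{n}$, and $\mathcal G^n=\bigcup_i \mathcal H_i^n$ is a directed union, because any finite tuple of arrows lies in a single $\mathcal H_i$. Extension by zero gives inclusions $\mathbb Z\mathcal H_i^{n}\hookrightarrow \mathbb Z\mathcal G^{n}$. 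Every compact open subset of $\mathcal G^{n}$ is covered by finitely many compact opens each lying in some $\mathcal H_i^n$, hence lies in a single $\mathcal H_j^n$ by directedness. So $\mathbb Z\mathcal G^{n}=\varinjlim_D \mathbb Z\mathcal H_i^{n}$ as abelian groups. For $n=1$ this is $\mathbb Z\mathcal G=\varinjlim\mathbb Z\mathcal H_i$ as rings, since the inclusions $\mathbb Z\mathcal H_i\to\mathbb Z\mathcal G$ are ring homomorphisms: convolution of functions supported in $\mathcal H_i$ only involves arrows of $\mathcal H_i$. The face maps $d^n_k$ restrict to the face maps of $\mathcal H_i$, so the bar complexes form a directed system of complexes, $B_\bullet(\mathcal H_i)\to B_\bullet(\mathcal G)$, compatible with the module actions, with colimit $B_\bullet(\mathcal G)$.

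Next handle the coefficients. Put $M_i=\mathbb Z\mathcal H_i\cdot M$, a unitary $\mathbb Z\mathcal H_i$-module and a submodule of $M$. For $i\le j$ we have $M_i\subseteq M_j$, since $\mathbb Z\mathcal H_i\subseteq \mathbb Z\mathcal H_j$. Moreover $M=\bigcup_i M_i$: unitarity gives $M=\mathbb Z\mathcal G M$, and each element of $\mathbb Z\mathcal G$ lies in some $\mathbb Z\mathcal H_i$, so each element of $M$ is a finite sum $\sum f_km_k$ that lies in a single $M_j$. Thus $M=\varinjlim M_i$, compatibly with the ring system. Applying the remark preceding the theorem degreewise gives
\[B_n(\mathcal G)\otimes_{\mathbb Z\mathcal G}M\cong\varinjlim_D\bigl(B_n(\mathcal H_i)\otimes_{\mathbb Z\mathcal H_i}M_i\bigr),\]
naturally in $n$, so this is an isomorphism of chain complexes. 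Directed colimits of abelian groups are exact and hence commute with homology. The left-hand side computes $\Tor^{\mathbb Z\mathcal G}_n(\mathbb Z\mathcal G^0,M)=H_n(\mathcal G,M)$, because $B_\bullet(\mathcal G)$ is a flat resolution by unitary modules and flat resolutions compute $\Tor$. Each term on the right computes $H_n(\mathcal H_i,M_i)$ for the same reason. This gives the isomorphism in the theorem.

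For the final assertion take $M=\mathbb Z\mathcal G^0$ and check that $\mathbb Z\mathcal H_i\cdot \mathbb Z\mathcal G^0=\mathbb Z\mathcal H_i^0$. Indeed, $1_U\cdot 1_V=1_{U\cap V}$-type computations with $U\subseteq\mathcal H_i^0$ show $\mathbb Z\mathcal H_i^0\subseteq \mathbb Z\mathcal H_i\cdot\mathbb Z\mathcal G^0$. Conversely, for a compact bisection $B\subseteq\mathcal H_i$, the element $1_B\cdot 1_V$ is the indicator of $\ran(B\cap\sour^{-1}(V))\subseteq \mathcal H_i^0$. Under this identification the left $\mathbb Z\mathcal H_i$-action agrees with the trivial action.

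The main obstacle is bookkeeping rather than a deep step. One must verify that $\mathbb Z\mathcal G^n$ is genuinely the directed union of the $\mathbb Z\mathcal H_i^n$ in the non-Hausdorff setting, where elements are spans of indicators of compact opens rather than locally constant functions. One must also verify that the transition maps are compatible with both the right module structures and the differentials, including $d_0=\sour_*$, which is valid because the source map of $\mathcal H_i$ is the restriction of that of $\mathcal G$ and $\mathcal H_i^1=\mathcal H_i$ is open in $\mathcal G$. For the first point, take a spanning indicator $1_K$ of $\mathbb Z\mathcal G^n$, cover $K$ by the open sets $\mathcal H_i^n$, and use quasi-compactness of $K$ together with directedness to find one $j$ with $K\subseteq \mathcal H_j^n$. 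Then $1_K$ is the extension by zero of $1_K\in\mathbb Z\mathcal H_j^n$.
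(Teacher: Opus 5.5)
Your proposal is correct and follows the paper's own proof. Like the paper, you write $B_\bullet(\mathcal G)$ and $M$ as directed unions of $B_\bullet(\mathcal H_i)$ and $\mathbb Z\mathcal H_i\cdot M$, apply the commutation of tensor products with directed colimits of rings and modules, pass to homology by exactness of directed colimits, and finish with $\mathbb Z\mathcal H_i\cdot\mathbb Z\mathcal G^0=\mathbb Z\mathcal H_i^0$. You also spell out details the paper leaves implicit: the quasi-compactness argument for $\mathbb Z\mathcal G^n=\bigcup_i\mathbb Z\mathcal H_i^n$, and the compatibility of the face maps and module actions.
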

\begin{proof}
    Notice that $\mathcal G^n=\bigcup_{i\in D}\mathcal H^n_i$ is a directed union of open subspaces for $n\geq 0$.  It follows that $\mathbb Z\mathcal G^n=\bigcup_{i\in D} \mathbb Z\mathcal H_i^n$ is a directed union with each $\mathbb Z\mathcal H^n_i$ a unitary right $\mathbb Z\mathcal H_i$-module.  Also, we have that $M = \mathbb Z\mathcal G\cdot M=\bigcup_{i\in D}\mathbb Z\mathcal H_i\cdot M$. Noting that the bar resolution is functorial, %Thus $B_\bullet(\mathcal G)\cong \bigcup_{i\in D}B_\bullet(\mathcal H_i)$.   
    the discussion preceding the theorem implies that \[B_\bullet(\mathcal G)\otimes_{\mathbb Z\mathcal G} M\cong \varinjlim_{i\in D} (B_\bullet(\mathcal H_i)\otimes_{\mathbb Z\mathcal H_i}\mathbb Z\mathcal H_i\cdot M).\]  Since $\varinjlim_{i\in D}$ is an exact functor, as $D$ is directed, taking homology commutes with $\varinjlim_{i\in D}$.  Therefore, we have that  $H_n(\mathcal G,M)=H_n(B_\bullet(\mathcal G)\otimes_{\mathbb Z\mathcal G} M)\cong \varinjlim_{i\in D} H_n(B_\bullet(\mathcal H_i)\otimes_{\mathbb Z\mathcal H_i} \mathbb Z\mathcal H_i\cdot M)=\varinjlim_{i\in D}H_n(\mathcal H_i, \mathbb Z\mathcal H_i\cdot M)$, as required.  

    The final statement follows on observing that $\mathbb Z\mathcal H_i\cdot \mathbb Z\mathcal G^0 = \mathbb Z\mathcal H_i^0$.
\end{proof}

Considering in Theorem~\ref{t:dlim.hom} the special case where each $H_n(\mathcal H_i,\mathbb Z\mathcal H_i\cdot M)=0$ and noting that $\mathbb Z\mathcal H_i\cdot M$ is an $R\mathcal H_i$-module when $M$ is an $R\mathcal G$-module, we obtain the following corollary.

\begin{Cor}\label{c:dir.hom.zero}
    Let $\mathcal G$ be a directed union of open subgroupoids of $R$-ho\-mo\-lo\-gi\-cal dimension $0$.  Then $\mathcal G$ has $R$-homological dimension $0$.
\end{Cor}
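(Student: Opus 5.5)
The corollary should follow almost immediately from Theorem~\ref{t:dlim.hom}, so the plan is a short direct argument. Write $\mathcal G=\bigcup_{i\in D}\mathcal H_i$ as a directed union of open subgroupoids, each of $R$-homological dimension $0$. Fix $n>0$ and a unitary left $R\mathcal G$-module $M$; we must show $H_n(\mathcal G,M)=0$. By the identification $R\mathcal G\cong R\otimes_{\mathbb Z}\mathbb Z\mathcal G$, we may regard $M$ as a unitary $\mathbb Z\mathcal G$-module carrying a commuting $R$-action. Theorem~\ref{t:dlim.hom} then gives
\[H_n(\mathcal G,M)\cong \varinjlim_{i\in D} H_n(\mathcal H_i,\mathbb Z\mathcal H_i\cdot M).\]
So it suffices to show that each term of this direct limit vanishes.

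The key step is to check that $\mathbb Z\mathcal H_i\cdot M$ is a unitary $R\mathcal H_i$-module. First, $\mathbb Z\mathcal H_i\cdot M$ is an $R$-submodule of $M$. This holds because the $R$-action commutes with the $\mathbb Z\mathcal G$-action, so $r\cdot (fm)=f(rm)$ with $f\in\mathbb Z\mathcal H_i$. Second, it is a $\mathbb Z\mathcal H_i$-submodule, since $\mathbb Z\mathcal H_i$ is a subring of $\mathbb Z\mathcal G$. It is unitary because $\mathbb Z\mathcal H_i$ is spanned by characteristic functions $1_U$ of compact bisections $U\subseteq\mathcal H_i$. Each such $1_U$ satisfies $1_U=1_{UU\inv}1_U$ with $1_{UU\inv}\in\mathbb Z\mathcal H_i$, so every $fm$ lies in $\mathbb Z\mathcal H_i\cdot(\mathbb Z\mathcal H_i\cdot M)$. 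Hence $\mathbb Z\mathcal H_i\cdot M$ is a unitary $\mathbb Z\mathcal H_i$-module with a commuting $R$-structure, i.e., a unitary $R\mathcal H_i$-module.

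Since $\mathcal H_i$ has $R$-homological dimension $0$, we get $H_n(\mathcal H_i,\mathbb Z\mathcal H_i\cdot M)=0$ for all $n>0$ and all $i$. A direct limit of zero groups is zero, so $H_n(\mathcal G,M)=0$ for $n>0$, and $\mathcal G$ has $R$-homological dimension $0$. The only point requiring any care is the module-theoretic bookkeeping in the previous paragraph. In particular, one must use that the $R$-structure commutes with the groupoid action, so that restriction produces $R\mathcal H_i$-modules and the hypothesis on $\mathcal H_i$ actually applies.
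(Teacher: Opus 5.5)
Your proposal is correct and follows the paper's own argument: the paper also gets the corollary directly from Theorem~\ref{t:dlim.hom}, using that $\mathbb Z\mathcal H_i\cdot M$ is an $R\mathcal H_i$-module whenever $M$ is an $R\mathcal G$-module. Your explicit checks that this submodule is unitary (via $1_U=1_{UU\inv}1_U$) and closed under the commuting $R$-action simply spell out the bookkeeping the paper leaves implicit.
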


If $R$ is a unital ring, then an idempotent $e\in R$ is \emph{full} if $ReR=R$.  In this case, $R$ and $eRe$ are Morita equivalent, and the natural maps $Re\otimes_{eRe}eR\to ReR=R$ and $eR\otimes_R Re\to eRe$ are bimodule isomorphisms~\cite[Example (18.30)]{Lam2}.  It follows that the functor $M\mapsto Me$ from right $R$-modules to right $eRe$-modules is an equivalence and hence preserves flatness.  Moreover, if $A$ is a right $R$-module and $B$ is a left $A$-module, then \[Ae\otimes_{eRe}eB\cong A\otimes_{R} Re\otimes_{eRe}eR\otimes_RB \cong A\otimes_R R\otimes_R B\cong A\otimes_R B.\]  It follows that $\Tor^{eRe}_n(Ae,eB)\cong \Tor^R_n(A,B)$ for $A$ and $B$ as above.  Indeed, if $F_\bullet\to A$ is a flat resolution, then $F_\bullet e\to Ae$ is a flat resolution and $\Tor^R_n(A,B)\cong H_n(F_\bullet\otimes_R B)\cong H_n(F_\bullet e\otimes_{eRe}eB)\cong \Tor^{eRe}_n(Ae,eB)$.

The following theorem can be deduced from~\cite[Corollary~4.6]{CMhomology}, but we give a direct and more elementary proof.

\begin{Thm}\label{t:cheap.Morita}
    Let $\mathcal G$ be an ample groupoid with compact unit space and let $Y\subseteq\mathcal G^0$ be compact open and full.  Then for any left $\mathbb ZG$-module $M$, $H_n(\mathcal G,M)\cong H_n(\mathcal G^Y_Y,1_YM)$. In particular, $H_n(\mathcal G)\cong H_n(\mathcal G^Y_Y)$.
\end{Thm}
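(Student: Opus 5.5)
Since $\mathcal G^0$ is compact, $R=\mathbb Z\mathcal G$ is unital with identity $1_{\mathcal G^0}$. Put $e=1_Y$, an idempotent of $R$. The plan is to apply the Morita discussion immediately preceding the theorem with this $R$ and $e$. Everything reduces to three checks: $e$ is full; $eRe\cong\mathbb Z\mathcal G^Y_Y$; and $Ae\cong \mathbb Z(\mathcal G^Y_Y)^0=\mathbb ZY$ as right $eRe$-modules when $A=\mathbb Z\mathcal G^0$. Granting these, the preceding paragraph gives
\[H_n(\mathcal G,M)=\Tor_n^{R}(\mathbb Z\mathcal G^0,M)\cong \Tor_n^{eRe}(\mathbb Z\mathcal G^0e,eM)\cong H_n(\mathcal G^Y_Y,1_YM).\]
Taking $M=\mathbb Z\mathcal G^0$ (left trivial module), we have $1_Y\mathbb Z\mathcal G^0=\mathbb ZY$, which yields the final claim.

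\emph{Identification of the corner.} By the convolution formula, $(1_Yf1_Y)(g)=1_Y(\ran(g))f(g)1_Y(\sour(g))$. Hence $eRe$ consists of the elements of $\mathbb Z\mathcal G$ supported in the open subgroupoid $\mathcal G^Y_Y$, and this is $\mathbb Z\mathcal G^Y_Y$ as a ring; in particular $e$ acts as the identity of $\mathbb Z\mathcal G^Y_Y$. The same computation shows that for the trivial right module, $h1_Y=h|_Y$ (extended by zero). So $\mathbb Z\mathcal G^0e=\mathbb ZY$, and one checks directly from the formula for the trivial action that the restricted action of $\mathbb Z\mathcal G^Y_Y$ is its trivial right action. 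Similarly $eM=1_YM$ with the obvious action, and it is unitary over $\mathbb Z\mathcal G^Y_Y$. Here I assume, as in the definition of homology, that $M$ is unitary; this is automatic since $R$ is unital.

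\emph{Fullness}, which I expect to be the main step. We must show $1_{\mathcal G^0}\in R1_YR$. Because $Y$ is full, for each $x\in\mathcal G^0$ there is $g$ with $\ran(g)=x$ and $\sour(g)\in Y$. Choose a compact open bisection $U_x\ni g$ with $\sour(U_x)\subseteq Y$. The sets $\ran(U_x)$ are compact open and cover the compact space $\mathcal G^0$, so finitely many suffice. Refine these to a disjoint clopen partition $W_1,\dots,W_k$ of $\mathcal G^0$, and replace each chosen $U$ by $U_j=W_jU$, which is still a compact open bisection with $\ran(U_j)=W_j$ and $\sour(U_j)\subseteq Y$. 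Then
\[1_{U_j}1_Y1_{U_j^{-1}}=1_{U_jU_j^{-1}}=1_{W_j},\qquad\text{so}\qquad 1_{\mathcal G^0}=\sum_{j=1}^k 1_{U_j}1_Y1_{U_j\inv}\in ReR.\]
This proves $ReR=R$, and the Morita isomorphism of Tor from the preceding discussion applies.
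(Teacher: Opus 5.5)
Your proposal is correct and follows the paper's proof: you identify the corner $1_Y\mathbb Z\mathcal G 1_Y$ with $\mathbb Z\mathcal G^Y_Y$ and $\mathbb Z\mathcal G^0 1_Y$ with $\mathbb ZY$, prove that $1_Y$ is a full idempotent using compactness of $\mathcal G^0$ and fullness of $Y$, and then invoke the Morita isomorphism of $\Tor$. The only difference is in the fullness step. You disjointify the cover by ranges of bisections to get $1_{\mathcal G^0}=\sum_j 1_{U_j}1_Y1_{U_j^{-1}}$ directly, whereas the paper keeps the overlapping cover and uses inclusion--exclusion; both routes work equally well.
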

\begin{proof}
 First, observe that $1_Y\mathbb Z\mathcal G1_Y=\mathbb Z\mathcal G_Y^Y$, viewed as a subring via extension by $0$.  We now show that $1_Y$ is a full idempotent.   
    
    Indeed, since $\mathcal G_Y=\sour\inv(Y)$ is open, we can cover it by compact bisections.  Since $\mathcal G^0$ is compact and $\ran(\mathcal G_Y)=\mathcal G^0$ by fullness, we can find compact bisections $V_1,\ldots, V_n$ contained in $\mathcal G_Y$ such that $\mathcal G^0=\ran(\mathcal G_Y) =\bigcup_{i=1}^n\ran(V_i) =  V_1YV_1\inv \cup\cdots\cup V_nYV_n\inv$.   The inclusion-exclusion principle then yields
    \[1_{\mathcal G^0} = \sum_{k=1}^n(-1)^{k+1}\sum_{1\leq i_1<\cdots<i_k\leq n}1_{V_{i_1}}1_Y1_{V_{i_1}\inv}\cdots 1_{V_{i_k}}1_Y1_{V_{i_k}\inv} \in \mathbb Z\mathcal G\cdot 1_Y\cdot \mathbb Z\mathcal G.\] Therefore, $1_Y$ is full.

    Now $\mathbb Z\mathcal G^01_Y=\mathbb ZY$ as a right $\mathbb Z\mathcal G^Y_Y$-module.  Therefore, by the discussion before the  theorem
    \[H_n(\mathcal G,M) = \Tor^{\mathbb Z\mathcal G}_n(\mathbb Z\mathcal G^0, M)\cong \Tor_n^{\mathbb Z\mathcal G^Y_Y}(\mathbb ZY,1_YM)=H_n(\mathcal G^Y_Y,1_YM).\] In particular, taking $M=\mathbb Z\mathcal G^0$  and observing that $1_Y\mathbb Z\mathcal G^0=\mathbb ZY$ yields $H_n(\mathcal G)\cong H_n(\mathcal G_Y^Y)$.
\end{proof}

We now apply this to compact principal ample groupoids following~\cite{GPS}.

\begin{Thm}\label{t:hom.zero.prinipal}
    Let $\mathcal G$ be a compact principal ample groupoid.  Then $\mathcal G$ has homological dimension $0$.
\end{Thm}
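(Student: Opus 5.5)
Following the hint that compact principal groupoids are Morita equivalent to a space~\cite{GPS}, I will find a compact open full set $Y\subseteq\mathcal G^0$ that meets each orbit in exactly one point. Then $\mathcal G^Y_Y=Y$ is just a space. Theorem~\ref{t:cheap.Morita} gives $H_n(\mathcal G,M)\cong H_n(Y,1_YM)$. For a space (a groupoid consisting only of units), $\mathbb ZY$ is a commutative ring and the trivial module $\mathbb ZY$ is the regular module, hence projective. Thus $H_n(Y,N)=\Tor_n^{\mathbb ZY}(\mathbb ZY,N)=0$ for $n>0$. Note that $\mathcal G^0=\sour(\mathcal G)$ is compact, as the hypothesis of Theorem~\ref{t:cheap.Morita} requires.

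The real content is constructing such a transversal $Y$.
\begin{enumerate}
\item Since $\mathcal G$ is compact and Hausdorff (principal), $\mathcal G^0$ is clopen in $\mathcal G$. Hence $\mathcal G\setminus\mathcal G^0$ is compact.
\item Principality makes $(\ran,\sour)\colon\mathcal G\to\mathcal G^0\times\mathcal G^0$ injective and continuous, so it is a homeomorphism onto its image, a closed equivalence relation $\mathcal R$. Since $\mathcal G$ is uniformly bounded, orbits are finite.
\item I intend to show the orbit map $\pi\colon\mathcal G^0\to\mathcal G^0/\mathcal R$ is a local homeomorphism onto a compact Hausdorff totally disconnected space. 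Openness holds because $\pi^{-1}\pi(U)=\ran(\sour^{-1}(U))$ is open, $\ran$ being open for ample groupoids. Hausdorffness holds because $\mathcal R$ is closed and $\pi$ is open.
\item For local injectivity, pick $x$. Each arrow $g\neq x$ with $\sour(g)=x$ has $\ran(g)\neq x$. Using compactness of $\mathcal G\setminus\mathcal G^0$ and the fact that $\ran,\sour$ separate the finitely many arrows out of $x$, find a compact open neighborhood $U$ of $x$ with $\ran(g)\notin U$ whenever $\sour(g)\in U$ and $g\notin\mathcal G^0$. Concretely, the set of such $g$ is $\sour^{-1}(U)\cap\ran^{-1}(U)\setminus\mathcal G^0$; for $U$ shrinking to $x$ these form a decreasing family of compact sets with empty intersection, so one of them is empty. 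Then $\pi|_U$ is injective.
\item Cover $\mathcal G^0$ by finitely many such compact open $U_1,\dots,U_k$. Set $Y_1=U_1$ and
\[Y_j=U_j\setminus \ran(\sour^{-1}(U_1\cup\cdots\cup U_{j-1})).\]
Each $Y_j$ is compact open, because $\ran(\sour^{-1}(K))$ is compact open for $K$ compact open: it is $\pi^{-1}\pi(K)$, and $\pi(K)$ is clopen in the Hausdorff quotient. Let $Y=\bigcup_j Y_j$. Then $Y$ is compact open, meets every orbit, and meets each orbit at most once, since each $U_j$ does and later pieces remove saturations of earlier ones. Hence $\mathcal G^Y_Y=Y$.
\end{enumerate}

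With $Y$ in hand, Theorem~\ref{t:cheap.Morita} and the projectivity of $\mathbb ZY$ over itself complete the proof.

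The main obstacle is the point-set topology in steps 4 and 5. In particular, one must check that saturations of compact open sets are again clopen. I would handle this either by citing~\cite[Lemma~3.4]{GPS} directly or via the argument sketched above.
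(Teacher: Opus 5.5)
Your proposal is correct and takes the same route as the paper: find a compact open transversal $Y$ with $\mathcal G^Y_Y=Y$, then apply Theorem~\ref{t:cheap.Morita} together with the projectivity of $\mathbb ZY$ over itself. The only difference is in the point-set topology. You prove local injectivity of the orbit map directly, by a compactness argument on $\mathcal G\setminus\mathcal G^0$, and you build $Y$ by removing the saturations of earlier pieces; the paper instead shows $(\ran,\sour)\colon\mathcal G\to\mathcal R$ is a homeomorphism, cites \cite[Lemma~2.10]{AKM22}, and glues local sections of $q$ over a disjoint clopen cover of the quotient, which amounts to the same construction.
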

\begin{proof}
Let $\mathcal R$ be the orbit equivalence relation on $\mathcal G^0$.  Then we have that $\mathcal R\subseteq \mathcal G^0\times \mathcal G^0$ is the image of $(\ran,\sour)\colon \mathcal G\to \mathcal G^0\times \mathcal G^0$, and hence $\mathcal R$ is closed as $\mathcal G$ is compact and $\mathcal G^0$ is Hausdorff.  Thus $\mathcal G^0/\mathcal R$ is Hausdorff by~\cite[Proposition 10.4.8]{bourbakitop}, hence compact.    We claim that the projection $q\colon \mathcal G^0\to \mathcal G^0/\mathcal R$ is a local homeomorphism (cf.~\cite[p.~451, Comment]{GPS}).  %By~\cite[Lemma~2.10]{AKM22}, it suffices to show that the left action of $\mathcal G$ on $\mathcal G^0$ is basic.  
Recall that a left $\mathcal G$-space $X$ is basic~\cite{AKM22,AMiller} if $\mathcal G\times_{\mathcal G^0} X\to X\times_{\mathcal G\backslash X} X$ given by $(g,x)\mapsto (gx,x)$ is a homeomorphism, in which case $X\to \mathcal{G}\backslash X$ is a local homeomorphism by~\cite[Lemma~2.10]{AKM22}.  The left action of $\mathcal G$ on $\mathcal G^0$ is basic in our case with $\mathcal G\backslash \mathcal G^0=\mathcal G^0/\mathcal R$.  Indeed, identifying $\mathcal G\times_{\mathcal G^0} \mathcal G^0$ with $\mathcal G$ via the projection, this amounts to showing $(\ran,\sour)\colon \mathcal G\to \mathcal R\subseteq \mathcal G^0\times \mathcal G^0$ is a homeomorphism.  The map is surjective by definition.  The map is injective because $\mathcal G$ is principal. But a bijective continuous map between compact  spaces is a homeomorphism.

Since $q\colon \mathcal G^0\to \mathcal G^0/\mathcal R$ is a local homeomorphism and $\mathcal G^0/\mathcal R$ is compact, we deduce that $\mathcal G^0/\mathcal R$ is compact and totally disconnected.  Therefore, there is a section $t\colon \mathcal G^0/\mathcal R\to \mathcal G^0$ of $q$ (we can cover $\mathcal G^0/\mathcal R$ by finitely many disjoint clopen sets on which $q$ has a section and glue together the sections).  Then $Y=t(\mathcal G^0/\mathcal R)$ is a compact open subspace of $\mathcal G^0$ that hits every orbit exactly once and hence is full.  Observe that since  $Y$ intersects each orbit exactly once, if $g\in \mathcal G$ goes between elements of $Y$, then $g$ must be isotropy and hence a unit, as $\mathcal G$ is principal.   It follows that $\mathcal G^Y_Y=Y$.   Since $\mathbb ZY$ is a flat (in fact, projective) $\mathbb ZY$-module, $Y$ has homological dimension zero.   The result then follows from Theorem~\ref{t:cheap.Morita}.  
\end{proof}

The following lemma and theorem will not be used in the proof of Theorem~A, and so can be safely omitted by those only interested in Theorem~A.

\begin{Lemma}\label{l:ideal.nice}
    Let $R$ be a unital ring, $\mathcal G$ be an ample groupoid, $U\subseteq \mathcal G^0$ be open and invariant and $C=\mathcal G^0\setminus U$.  Let $M$ be a left $R\mathcal G$-module.  Then:
    \begin{enumerate}
        \item $R\mathcal G_U^U\otimes_{R\mathcal G} M\cong R\mathcal G^U_U\cdot M$ via the action map.
        \item If $M$ is unitary, $R\mathcal G_C^C\otimes_{R\mathcal G}M\cong M/R\mathcal G^U_U\cdot M$.
    \end{enumerate}
\end{Lemma}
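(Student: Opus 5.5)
Write $I=R\mathcal G^U_U$, which by the discussion at the end of Section~2 is a two-sided ideal of $R\mathcal G$ with $R\mathcal G/I\cong R\mathcal G^C_C$ via restriction. The plan is to show that $I$ is a flat right $R\mathcal G$-module and satisfies $I\cdot I=I$. Part~(1) then follows from the standard argument, and part~(2) follows by tensoring the short exact sequence $0\to I\to R\mathcal G\to R\mathcal G^C_C\to 0$ with $M$.

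\textbf{Part (1).} The first key step is that $I$ has local units. Every element of $I$ is a finite combination of characteristic functions $1_V$ with $V\subseteq \mathcal G^U_U$ compact open; this uses Proposition~\ref{p:generate} applied to $\mathcal G^U_U$, or directly the fact that compact bisections of the open set $\mathcal G^U_U$ span. Given finitely many such $V_i$, the set $E=\bigcup_i\ran(V_i)\subseteq U$ is compact open, so $1_E\in I$ and $1_E 1_{V_i}=1_{V_i}$ for all $i$. Hence for every finite subset $F\subseteq I$ there is an idempotent $e\in R U\subseteq I$ with $ef=f$ for all $f\in F$.

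Next we show that the action map $\mu\colon I\otimes_{R\mathcal G}M\to M$ is injective; its image is $I\cdot M$ by definition. Take $\sum_j a_j\otimes m_j$ with $\sum_j a_j m_j=0$, and choose $e$ as above with $ea_j=a_j$ for all $j$. Then
\[\sum_j a_j\otimes m_j=\sum_j ea_j\otimes m_j=e\otimes \sum_j a_jm_j=0,\]
since $e\in I$ and $a_j\in R\mathcal G$. This requires no flatness and is the whole argument for~(1).

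\textbf{Part (2).} Tensoring the exact sequence $0\to I\to R\mathcal G\to R\mathcal G^C_C\to 0$ of right $R\mathcal G$-modules with $M$ gives the right exact sequence
\[I\otimes_{R\mathcal G} M\to R\mathcal G\otimes_{R\mathcal G}M\to R\mathcal G^C_C\otimes_{R\mathcal G} M\to 0.\]
Because $M$ is unitary, $R\mathcal G\otimes_{R\mathcal G}M\cong M$ via the action map. This is the point needing care, since $R\mathcal G$ is non-unital. $R\mathcal G$ also has local units (idempotents $1_E$ with $E\subseteq\mathcal G^0$ compact open), so the same argument as in~(1) gives injectivity, and surjectivity is exactly unitarity. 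Under this identification the first map becomes the action map of $I$, whose image is $I\cdot M$ by~(1). Hence
\[R\mathcal G^C_C\otimes_{R\mathcal G}M\cong M/R\mathcal G^U_U\cdot M.\]

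\textbf{Main obstacle.} The only delicate point is the local-units claim: checking that $1_{\ran(V)}$ lies in $I$ and that elements of $I$ are spanned by indicators of compact opens inside $\mathcal G^U_U$ in the possibly non-Hausdorff setting. For the latter I would cite the fact from Section~2 that $R\mathcal G^U_U$, as the algebra of the open ample subgroupoid $\mathcal G^U_U$, is spanned by characteristic functions of its compact bisections, viewed in $R\mathcal G$ via extension by zero.
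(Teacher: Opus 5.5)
Your proof is correct and follows essentially the same route as the paper: part (1) is the same local-unit argument, and you usefully make explicit that the local unit $1_E$ must be taken with $E\subseteq U$ so that it lies in $R\mathcal G^U_U$. Part (2) is the same comparison of $R\mathcal G\otimes_{R\mathcal G}M\cong M$ with right exactness, which the paper packages as a snake-lemma diagram; your opening announcement that you will prove $I$ is flat is never used or needed and should simply be deleted.
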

\begin{proof}
    Clearly, the action map $R\mathcal G_U^U\otimes_{R\mathcal G} M\to R\mathcal G^U_U\cdot M$ is a surjective $R\mathcal G^U_U$-module homomorphism.  Suppose that $\sum f_i\otimes m_i$ belongs to the kernel, that is, $\sum f_im_i=0$.  Choose $U\subseteq \mathcal G^0$ compact open such that $1_Uf_i=f_i$ for all $i$.  Then  $\sum f_i\otimes m_i =  \sum 1_Uf_i\otimes m_i= 1_U\otimes \sum f_im_i=0$, as required. 

    For the second item, we identify $R\mathcal G^C_C$ with $R\mathcal G/R\mathcal G^U_U$.  We have a commutative diagram with exact rows and surjective columns
   \[\begin{tikzcd}
  % & & 0\ar{d} & \\
    &    R\mathcal G^U_U\otimes_{R\mathcal G} M\ar{r}\ar{d}& R\mathcal G\otimes_{R\mathcal G}M \ar{r}\ar{d} & R\mathcal G/R\mathcal G_U^U\otimes_{R\mathcal G} M\ar{r}\ar{d} & 0 \\
        0\ar{r} & R\mathcal G^U_U\cdot M\ar{r} & M \ar{r}& M/R\mathcal G^U_U\cdot M%\\
        %& 0 & 0& 0
    \end{tikzcd}\]
    where the middle vertical map is an isomorphism by (1) applied to the open invariant set $\mathcal G^0$ and the third vertical map is $(r+R\mathcal G^U_U)\otimes m\mapsto rm+R\mathcal G^U_U\cdot M$.  The snake lemma now implies that this third map is an isomorphism.  
\end{proof}

Of course, the left-right dual of Lemma~\ref{l:ideal.nice} holds. Notice that taking $U=\mathcal G^0$, we have that $R\mathcal G\otimes_{R\mathcal G}M\cong R\mathcal G\cdot M$.

The following generalizes~\cite[Proposition~4.2]{millersteinberg} to module coefficients.

\begin{Thm}\label{t:exact.seq}
    Let $\mathcal G$ be an ample groupoid and $M$ a unitary left $\mathbb Z\mathcal G$-module.  If $U\subseteq \mathcal G^0$ is an open invariant subset and $C=\mathcal G^0\setminus U$, then there is a long exact sequence
    \begin{align*}
    \cdots&\to H_{n+1}(\mathcal G^C_C,M/\mathbb Z\mathcal G^U_U\cdot M)\to H_n(\mathcal G^U_U,\mathbb ZG^U_U\cdot M)\to H_n(\mathcal G,M)\\ &\to H_n(\mathcal G^C_C,M/\mathbb Z\mathcal G^U_U\cdot M) \to\cdots
    \end{align*}
    in homology.
\end{Thm}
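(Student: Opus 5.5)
Tensor the bar resolution $B_\bullet(\mathcal G)$ with the short exact sequence of right $\mathbb Z\mathcal G$-modules
\[0\to \mathbb Z U\to \mathbb Z\mathcal G^0\to \mathbb Z C\to 0\]
coming from the exact sequence $0\to\mathbb ZU\to\mathbb Z\mathcal G^0\to\mathbb ZC\to 0$ recalled in Section~2; it is a sequence of right $\mathbb Z\mathcal G$-modules because $U$ is invariant. The long exact $\Tor$ sequence in the first variable gives
\[\cdots\to \Tor_n^{\mathbb Z\mathcal G}(\mathbb ZU,M)\to H_n(\mathcal G,M)\to \Tor_n^{\mathbb Z\mathcal G}(\mathbb ZC,M)\to \Tor_{n-1}^{\mathbb Z\mathcal G}(\mathbb ZU,M)\to\cdots.\]
It then suffices to identify $\Tor_n^{\mathbb Z\mathcal G}(\mathbb ZU,M)\cong H_n(\mathcal G^U_U,\mathbb Z\mathcal G^U_U\cdot M)$ and $\Tor_n^{\mathbb Z\mathcal G}(\mathbb ZC,M)\cong H_n(\mathcal G^C_C,M/\mathbb Z\mathcal G^U_U\cdot M)$.

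For the $U$ term, restrict the bar resolution. Since $U$ is invariant, the preimage of $U$ under $\mathcal G^{n+1}\to\mathcal G^0$, $(g_0,\dots,g_n)\mapsto \sour(g_n)$, is exactly $(\mathcal G^U_U)^{n+1}$. Extension by zero therefore gives a subcomplex $B_\bullet(\mathcal G^U_U)\subseteq B_\bullet(\mathcal G)$, and this subcomplex is a flat resolution of $\mathbb ZU$. Each term is a unitary right $\mathbb Z\mathcal G^U_U$-module and also a right $\mathbb Z\mathcal G$-module, since $\mathbb Z\mathcal G^U_U$ is an ideal.

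The key point, and the step I expect to be the main obstacle, is to show two things. First, $P\otimes_{\mathbb Z\mathcal G}M\cong P\otimes_{\mathbb Z\mathcal G^U_U}\mathbb Z\mathcal G^U_U\cdot M$ for $P=\mathbb Z(\mathcal G^U_U)^{n+1}$. Second, these modules are flat over $\mathbb Z\mathcal G$ itself. For the first, note that $P=P\cdot\mathbb Z\mathcal G^U_U$ is unitary over the ideal. Using the right-hand dual of Lemma~\ref{l:ideal.nice}(1), one gets
\[P\otimes_{\mathbb Z\mathcal G}M\cong P\otimes_{\mathbb Z\mathcal G^U_U}\mathbb Z\mathcal G^U_U\otimes_{\mathbb Z\mathcal G}M\cong P\otimes_{\mathbb Z\mathcal G^U_U}\mathbb Z\mathcal G^U_U\cdot M.\]
Here I would check $P\otimes_{\mathbb Z\mathcal G^U_U}\mathbb Z\mathcal G^U_U\cong P$ via local units, exactly as in that lemma. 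The same identity yields flatness over $\mathbb Z\mathcal G$: exactness in $M$ follows because $M\mapsto \mathbb Z\mathcal G^U_U\cdot M\cong \mathbb Z\mathcal G^U_U\otimes_{\mathbb Z\mathcal G}M$ is exact, as $\mathbb Z\mathcal G^U_U$ is a flat left $\mathbb Z\mathcal G$-module (being a directed union of projectives $\mathbb Z\mathcal G1_K$ with $K\subseteq U$ compact open). With both facts in hand, computing $\Tor$ with this resolution gives the $U$ term.

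For the $C$ term, $\mathbb ZC=\mathbb Z\mathcal G^0\otimes_{\mathbb Z\mathcal G}\mathbb Z\mathcal G^C_C$ is pulled back from the quotient ring $\mathbb Z\mathcal G^C_C=\mathbb Z\mathcal G/\mathbb Z\mathcal G^U_U$. I would use the change-of-rings spectral sequence, or more concretely the following argument. The quotient $B_\bullet(\mathcal G)/B_\bullet(\mathcal G^U_U)$ equals $\mathbb Z(\mathcal G^{n+1}|_C)=B_\bullet(\mathcal G^C_C)$, by the restriction sequence $0\to \mathbb ZU'\to\mathbb ZX\to\mathbb ZC'\to 0$ applied to $X=\mathcal G^{n+1}$. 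This gives a short exact sequence of complexes
\[0\to B_\bullet(\mathcal G^U_U)\to B_\bullet(\mathcal G)\to B_\bullet(\mathcal G^C_C)\to 0\]
whose terms are flat over $\mathbb Z\mathcal G$, the third being flat as a cokernel of a pure inclusion via the Tor sequence. Tensoring with $M$ keeps the sequence exact, and the three resulting complexes compute, respectively, the $U$-term above, $H_\ast(\mathcal G,M)$, and $B_\bullet(\mathcal G^C_C)\otimes_{\mathbb Z\mathcal G}M\cong B_\bullet(\mathcal G^C_C)\otimes_{\mathbb Z\mathcal G^C_C}(\mathbb Z\mathcal G^C_C\otimes_{\mathbb Z\mathcal G}M)$. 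By Lemma~\ref{l:ideal.nice}(2), the last complex is $B_\bullet(\mathcal G^C_C)\otimes_{\mathbb Z\mathcal G^C_C}M/\mathbb Z\mathcal G^U_U\cdot M$, whose homology is $H_n(\mathcal G^C_C,M/\mathbb Z\mathcal G^U_U\cdot M)$. The long exact homology sequence of this short exact sequence of complexes is then the desired sequence, which avoids needing the intermediate $\Tor$ identifications in the first paragraph.
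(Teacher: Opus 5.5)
Your route differs from the paper's, and it is sound except at the one step where the two routes actually diverge. The paper tensors the flat complex $B_\bullet(\mathcal G)$ with the short exact sequence of coefficients $0\to\mathbb Z\mathcal G^U_U\cdot M\to M\to M/\mathbb Z\mathcal G^U_U\cdot M\to 0$. Exactness is then automatic, and all the work goes into identifying the outer complexes via Lemma~\ref{l:ideal.nice}. You instead split the resolution, $0\to B_\bullet(\mathcal G^U_U)\to B_\bullet(\mathcal G)\to B_\bullet(\mathcal G^C_C)\to 0$, and tensor with $M$. Your identifications of the outer complexes are correct and are essentially the paper's. The problem is that exactness after tensoring is no longer automatic, and you do not prove it.

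Saying the third term is flat ``as a cokernel of a pure inclusion'' presupposes that $B_n(\mathcal G^U_U)\otimes_{\mathbb Z\mathcal G}M\to B_n(\mathcal G)\otimes_{\mathbb Z\mathcal G}M$ is injective, which is exactly what must be shown. Flatness of the submodule and of the ambient module does not give purity: consider $2\mathbb Z\subseteq\mathbb Z$. As written, this is a genuine hole.

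You can fill it with tools you already use. Since $B_n(\mathcal G)\cdot\mathbb Z\mathcal G^U_U=B_n(\mathcal G^U_U)$, Lemma~\ref{l:ideal.nice}(1) and its dual give a natural isomorphism $B_n(\mathcal G)\otimes_{\mathbb Z\mathcal G}\mathbb Z\mathcal G^U_U\cdot M\cong B_n(\mathcal G^U_U)\otimes_{\mathbb Z\mathcal G}M$, $b\otimes am\mapsto ba\otimes m$. Under it, your map becomes $B_n(\mathcal G)\otimes_{\mathbb Z\mathcal G}(-)$ applied to the inclusion $\mathbb Z\mathcal G^U_U\cdot M\hookrightarrow M$, which is injective because $B_n(\mathcal G)$ is flat.

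Alternatively, prove directly that $B_n(\mathcal G^C_C)$ is flat over $\mathbb Z\mathcal G$. You already have $B_n(\mathcal G^C_C)\otimes_{\mathbb Z\mathcal G}M\cong B_n(\mathcal G^C_C)\otimes_{\mathbb Z\mathcal G^C_C}M/\mathbb Z\mathcal G^U_U\cdot M$. The functor $M\mapsto M/\mathbb Z\mathcal G^U_U\cdot M$ is exact on unitary modules, because the local units $1_K$ with $K\subseteq U$ give $M'\cap\mathbb Z\mathcal G^U_U\cdot M=\mathbb Z\mathcal G^U_U\cdot M'$.

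Once patched, your tensored sequence of complexes is isomorphic to the paper's, so the two long exact sequences coincide. The paper's ordering avoids any purity question. Yours additionally identifies the outer terms with $\Tor^{\mathbb Z\mathcal G}_n(\mathbb ZU,M)$ and $\Tor^{\mathbb Z\mathcal G}_n(\mathbb ZC,M)$.

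A minor slip: for $\mathbb Z\mathcal G^U_U\otimes_{\mathbb Z\mathcal G}(-)$ to be exact you need $\mathbb Z\mathcal G^U_U$ flat as a \emph{right} module. That means writing it as the directed union of the $1_K\mathbb Z\mathcal G$, not of the $\mathbb Z\mathcal G1_K$; the same argument works. Flatness of $B_n(\mathcal G^U_U)$ over $\mathbb Z\mathcal G$ is not needed for your final argument anyway.
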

\begin{proof}

    The exact sequence $0\to \mathbb Z\mathcal G^U_U\cdot M\to M\to M/\mathbb Z\mathcal G^U_U\cdot M\to 0$ of left $\mathbb Z\mathcal G$-modules gives rise to a long exact sequence in homology 
    \begin{align*}
    \cdots&\to H_{n+1}(\mathcal G,M/\mathbb Z\mathcal G^U_U\cdot M)\to H_n(\mathcal G,\mathbb ZG^U_U\cdot M)\to H_n(\mathcal G,M)\\ &\to H_n(\mathcal G,M/\mathbb Z\mathcal G^U_U\cdot M) \to\cdots
    \end{align*}
    and so it remains to prove that  $H_n(\mathcal G,\mathbb ZG^U_U\cdot M)\cong  H_n(\mathcal G^U_U,\mathbb ZG^U_U\cdot M)$ and 
    $H_n(\mathcal G,M/\mathbb Z\mathcal G^U_U\cdot M)\cong H_n(\mathcal G^C_C,M/\mathbb Z\mathcal G^U_U\cdot M)$.  

    Observe that $(\mathcal G_U^U)^n$ is an open subset of $\mathcal G^n$ with complement $(\mathcal G^C_C)^n$ using the invariance of $U$. Therefore, $0\to \mathbb Z(\mathcal G^U_U)^n\to \mathbb Z\mathcal G^n\to \mathbb Z(\mathcal G^C_C)^n\to 0$ is exact where the last map is restriction.  Note that $\mathbb Z\mathcal G^n\cdot \mathbb Z\mathcal G^U_U = \mathbb Z(\mathcal G^U_U)^n$ by invariance of $U$, and so $\mathbb Z\mathcal G^n/\mathbb Z(\mathcal G^U_U)^n=\mathbb Z\mathcal G^n/\mathbb Z\mathcal G^n\cdot \mathbb Z\mathcal G^U_U$ is a $\mathbb Z\mathcal G/\mathbb Z\mathcal G^U_U$-module, which can be identified with $\mathbb Z(\mathcal G^C_C)^n$ as a $\mathbb Z\mathcal G^C_C$-module via the restriction map.  Moreover, the restriction map commutes with the maps $(d^n_i)_\ast$ and $\sour_*$ used to define the boundary maps in the bar resolution because every preimage of an element of $\mathcal G^C_C$ under $d_i^n$, or $\sour$, belongs to $\mathcal G^C_C$ by invariance of $C$.   Therefore, $B_\bullet(\mathcal G)\cdot \mathbb Z\mathcal G^U_U=B_\bullet(\mathcal G^U_U)$ as chain complexes of $\mathbb Z\mathcal G^U_U$-modules and $B_\bullet(\mathcal G)/B_\bullet(\mathcal G)\cdot \mathbb Z\mathcal G_U^U\cong B_\bullet(\mathcal G^C_C)$ as chain complexes of $\mathbb Z\mathcal G^C_C$-modules.

Putting this all together, and using several times Lemma~\ref{l:ideal.nice} and its left-right dual, we compute that 
\begin{align*}
B_\bullet(\mathcal G)\otimes_{\mathbb Z\mathcal G}  \mathbb Z\mathcal G^U_U\cdot M &\cong B_\bullet(\mathcal G)\otimes_{\mathbb Z\mathcal G} \mathbb Z\mathcal G^U_U\otimes_{\mathbb Z\mathcal G^U_U} \mathbb Z\mathcal G^U_U\cdot M \\ &\cong B_\bullet(\mathcal G)\cdot \mathbb Z\mathcal G^U_U \otimes_{\mathbb Z\mathcal G^U_U} \mathbb Z\mathcal G^U_U\cdot M\\ &\cong B_\bullet(\mathcal G_U^U)\otimes_{\mathbb Z\mathcal G^U_U} \mathbb Z\mathcal G^U_U\cdot M\\
B_{\bullet}(\mathcal G)\otimes_{\mathbb Z\mathcal G}M/\mathbb Z\mathcal G_U^U\cdot M &\cong B_{\bullet}(\mathcal G)\otimes_{\mathbb Z\mathcal G} \mathbb Z\mathcal G^C_C\otimes_{\mathbb Z\mathcal G^C_C} M/\mathbb Z\mathcal G^U_U\cdot M\\ &\cong B_\bullet(\mathcal G)/B_\bullet(\mathcal G)\cdot \mathbb Z\mathcal G_U^U \otimes_{\mathbb Z\mathcal G^C_C}M/\mathbb Z\mathcal G^U_U\cdot M\\ &\cong B_\bullet(\mathcal G^C_C)\otimes_{\mathbb Z\mathcal G^C_C}M/\mathbb Z\mathcal G^U_U\cdot M.
\end{align*}
Computing homology with the bar resolution yields $H_n(\mathcal G,\mathbb ZG^U_U\cdot M)\cong  H_n(\mathcal G^U_U,\mathbb ZG^U_U\cdot M)$ and 
    $H_n(\mathcal G,M/\mathbb Z\mathcal G^U_U\cdot M)\cong H_n(\mathcal G^C_C,M/\mathbb Z\mathcal G^U_U\cdot M)$.  
\end{proof}

\section{Proof of Theorem~A}
We need two lemmas to prove Theorem~A; they will also be used in the proof of Theorem~B.

\begin{Lemma}\label{l:generating.set}
Let $\mathcal G$ be an ample groupoid such that $\mathcal G=\bigcup S$ with $S$ an inverse semigroup of compact bisections.  Suppose that $X\subseteq S$ generates $S$ as  an inverse semigroup.  Then $\ker \sour_\ast\colon R\mathcal G\to R\mathcal G^0$ is generated as a right ideal by the elements $1_V-1_{V\inv V}$ with $V\in X$ for any unital ring $R$.     
\end{Lemma}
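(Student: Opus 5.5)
Let $I$ denote the right ideal of $R\mathcal G$ generated by the elements $1_V-1_{V\inv V}$ with $V\in X$. The plan is to show first that $I\subseteq \ker\sour_*$, then that $I$ contains $1_V-1_{V\inv V}$ for every $V\in S$, and finally that these elements span $\ker \sour_*$ modulo the units, using Proposition~\ref{p:generate}.

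\emph{Containment.} For a compact bisection $V$ we have $\sour_*(1_V)=1_{\sour(V)}=1_{V\inv V}$, so $1_V-1_{V\inv V}\in\ker\sour_*$. Since $\sour_*$ is a right module map $R\mathcal G\to R\mathcal G^0$ (the trivial right module), its kernel is a right ideal, and so $I\subseteq \ker\sour_*$.

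\emph{Closure under the generators of $S$.} First, for $V\in X$ we have $1_{V\inv}-1_{VV\inv} = -(1_V-1_{V\inv V})1_{V\inv}\in I$, since $1_V1_{V\inv}=1_{VV\inv}$ and $1_{V\inv V}1_{V\inv}=1_{V\inv}$. Second, if $1_U-1_{U\inv U}$ and $1_W-1_{W\inv W}$ lie in $I$, then
\[1_{UW}-1_{(UW)\inv UW} = (1_U-1_{U\inv U})1_W + 1_{U\inv U}1_W - 1_{W\inv U\inv UW}.\]
Here $1_{U\inv U}1_W=1_{U\inv UW}$, and $(U\inv UW)\inv (U\inv U W)=W\inv U\inv UW$. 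So it suffices to show that $1_{eW}-1_{(eW)\inv eW}\in I$ for $e$ an idempotent compact open subset of $\mathcal G^0$. This follows from $1_{eW}-1_{W\inv eW} = 1_e(1_W-1_{W\inv W})$; but that is a \emph{left} multiplication, which is a problem. I would instead write $1_{eW}-1_{W\inv eW}=(1_W-1_{W\inv W})1_{W\inv e W}$, using $1_W1_{W\inv eW}=1_{eW}$. This is a right multiplication, so it lies in $I$. Idempotents $e\in S$ contribute $1_e-1_e=0$. By induction on word length, $1_V-1_{V\inv V}\in I$ for all $V\in S$, and then also for every compact open $V'\subseteq V\in S$, because $1_{V'}-1_{V'^{-1}V'}=(1_V-1_{V\inv V})1_{V'^{-1}V'}$.

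\emph{Spanning.} By Proposition~\ref{p:generate}, every $f\in R\mathcal G$ is an $R$-combination $\sum r_i1_{V_i}$ with each $V_i$ a compact bisection contained in an element of $S$. Then $f\equiv \sum r_i 1_{V_i\inv V_i}=\sour_*(f)$ modulo $I$, where $\sour_*(f)$ is viewed as an element of $R\mathcal G^0\subseteq R\mathcal G$. Hence if $\sour_*(f)=0$ then $f\in I$. One needs here that $I$ is closed under left multiplication by scalars in $R$, which holds because $r(1_V-1_{V\inv V})=(1_V-1_{V\inv V})(r1_{V\inv V})$.

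The main obstacle I expect is the one-sidedness: every manipulation must be a right multiplication. This is exactly why the inverse and product steps must be rewritten as above, with the correcting factors $1_{V\inv}$ and $1_{W\inv eW}$ placed on the right.
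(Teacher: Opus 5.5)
Your proof is correct and follows essentially the same route as the paper. It shows that $\ker\sour_*$ is $R$-spanned by the $1_V-1_{V\inv V}$ with $V$ contained in a member of $S$ (via Proposition~\ref{p:generate}), and that the set of ``good'' bisections is closed under inverses, products and passage to compact open sub-bisections, using only right multiplications. The differences are minor. Your product identity $1_{UW}-1_{(UW)\inv UW}=(1_U-1_{U\inv U})1_W+(1_W-1_{W\inv W})1_{W\inv U\inv UW}$ does not require $W\inv$ to be good, so you need inverse-closure only for generators. You also state explicitly that $I$ is closed under left scalars from $R$, which the paper leaves implicit.
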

\begin{proof}
     Recall that $R\mathcal G$ is generated by the characteristic functions of elements of the inverse semigroup $T$ of  compact bisections $V$ with $V\subseteq U$ for some $U\in S$ by Proposition~\ref{p:generate}.   If $V\in T$, then $\sour_*(1_V) = 1_{V\inv V}$.  Therefore, if $h=\sum_{i=1}^nr_i1_{V_i}\in \ker \sour_*$ with $V_i\in T$, then $\sum_{i=1}^nr_i1_{V_i\inv V_i}=0$, and so $h=\sum_{i=1}^nr_i(1_{V_i}-1_{V_i\inv V_i})$.
It follows that $\ker\sour_\ast$ is spanned over $R$ by the $1_V-1_{V\inv V}$ with $V\in T$.  

We prove $\ker \sour_\ast$ is the right ideal $\mathfrak R$ generated by elements of the form $1_V-1_{V\inv V}$ with $V\in X$.  Put $T'=\{V\in T\mid 1_V-1_{V\inv V}\in \mathfrak R\}\supseteq X$.  First note that $T'$ is closed under taking inverses since $1_{V\inv}-1_{VV\inv} = -(1_V-1_{V\inv V})1_{V\inv}\in \mathfrak R$ if $V\in T'$. Also, it is closed under products because \[1_{VW}-1_{W\inv V\inv VW} = (1_V-1_{V\inv V})1_W - (1_{W\inv}-1_{WW\inv} )1_{V\inv VW}\in \mathfrak R\] if $V,W\in T'$.   Finally, if $V\in T'$ and $U\subseteq V$ is a compact bisection, $VU\inv U=U$ and $V\inv VU\inv U =U\inv U$. Therefore, $1_U-1_{U\inv U}=(1_V-1_{V\inv V})1_{U\inv U}\in \mathfrak R$, and so $U\in T'$.  It follows that $T'=T$, and hence $\mathfrak R=\ker \sour_*$, as $T$ is the smallest inverse semigroup of compact bisections containing $X$ and closed downward under containment.
\end{proof}

Our next lemma is the technical heart of the proof and is adapted from~\cite[Proposition~3.6]{regulargpd}.  

\begin{Lemma}\label{l:isotropy.control}
Let $R$ be a unital ring and $\mathcal G$ an ample groupoid.  Suppose that $\mathcal G=\bigcup S$ where $S$ is a finitely generated inverse semigroup of compact open bisections.  If $R\mathcal G^0$ is a flat right $R\mathcal G$-module, then $\mathcal G$ is quasi-compact and $|\mathcal G^x_x|$ is invertible in $R$ for each $x\in \mathcal G^0$. \end{Lemma}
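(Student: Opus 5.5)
The plan is to use the standard criterion for flatness of a cyclic module. Put $I=\ker\sour_*$, so that $R\mathcal G^0\cong R\mathcal G\cdot 1/I$ in the unitary sense. By Proposition~\ref{p:generate}, $\mathcal G^0$ is compact, so $1_{\mathcal G^0}$ is a unit of $R\mathcal G$ and $R\mathcal G^0\cong R\mathcal G/I$ as right modules, with $I$ a right ideal. By Lemma~\ref{l:generating.set}, $I$ is generated as a right ideal by the finitely many elements $a_V=1_V-1_{V\inv V}$ with $V\in X$, where $X$ is a finite generating set of $S$. Flatness of $R\mathcal G/I$ for the finitely generated right ideal $I$ then gives the following (Villamayor/Chase criterion, applied to the finite tuple of generators): there is $e\in I$ with $e\,a_V=a_V$ for all $V\in X$. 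Equivalently, $a_V\in eR\mathcal G$ with $e\in I$; the left-right side must be checked, but the criterion for cyclic modules $R/I$ with $I$ a finitely generated right ideal gives a left-multiplicative local unit. Since $a_{V}$ generate $I$, we get $ef=f$ for all $f\in I$. Thus $I=eR\mathcal G$ with $e$ idempotent, and $u=1_{\mathcal G^0}-e$ is an idempotent satisfying $uI=0$ and $\sour_*(u)=1_{\mathcal G^0}$.

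Next I will extract the groupoid information from $u$. From $u a_V=0$ we get $u1_V=u1_{V\inv V}$ for all $V\in X$, and then, as in the proof of Lemma~\ref{l:generating.set}, $u1_W=u1_{W\inv W}$ for all $W\in T$. Evaluating the convolution pointwise gives $u(g)=u(\sour(g))$ summed appropriately. Concretely, $u\ast 1_W(g)=u(gW\inv)$ for $g\in \mathcal G W$, so $u$ is right invariant: $u(gh)=u(g)$ whenever $\sour(g)=\ran(h)$. Hence $u(g)$ depends only on $\sour(g)$, say $u(g)=c(\sour(g))$. Because $u$ lies in $R\mathcal G$, it is supported on a quasi-compact set $K$, a finite union of compact bisections. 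If $c(x)\neq 0$, then the whole fibre $\sour\inv(x)$ lies in $K$.

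The final step is to use $\sour_*(u)=1_{\mathcal G^0}$, which reads $\sum_{\sour(g)=x}u(g)=|\sour\inv(x)|\cdot c(x)=1$ for every $x$. Hence $c(x)\neq 0$ everywhere, so $\mathcal G=\sour\inv(\mathcal G^0)\subseteq K$ is quasi-compact. Moreover, $|\sour\inv(x)|=|\mathcal G^x_x|\cdot|\text{orbit}|$ is invertible in $R$. Since this is a product of commuting integers, $|\mathcal G^x_x|$ is a unit. It remains to confirm that right invariance really yields constancy on all of $\sour\inv(x)$; for this one uses that the arrows with source $x$ are of the form $g=$ products of elements of $T$ applied to $x$.

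I expect the main obstacle to be this first step: getting the correct sided version of the flatness criterion, so that the idempotent acts on the side that produces invariance. If the criterion instead yields $a_V=a_Ve$, I will work with the left action and invariance under left translation, and argue symmetrically with $\ran_*$ via $1_{V}-1_{VV\inv}$.
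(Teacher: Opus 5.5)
Your first step is sound, and it is a hands-on substitute for the paper's appeal to ``finitely presented flat modules are projective''. For a right ideal $I$, flatness of $R\mathcal G/I$ gives $e\in I$ with $ea_V=a_V$ for the finitely many generators. Hence $I=eR\mathcal G$, and $u=1_{\mathcal G^0}-e$ satisfies $uI=0$ and $\sour_*(u)=1_{\mathcal G^0}$. The right invariance $u(gh)=u(g)$ is also correct.

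The error is the next inference. Right invariance makes $u$ constant on $\ran$-fibres, not $\sour$-fibres, because any $g'$ with $\ran(g')=\ran(g)$ equals $g(g\inv g')$. So $u(g)=c(\ran(g))$, and the identity $\sum_{\sour(g)=x}u(g)=|\sour\inv(x)|\,c(x)$ is false. The deductions you draw from it ($c$ nowhere zero, hence $\mathcal G\subseteq K$; $|\sour\inv(x)|$ a unit) therefore collapse, and the last one is false in general. The pair groupoid on two points with $R=\mathbb Z$ shows this. There $R\mathcal G\cong M_2(\mathbb Z)$ and $R\mathcal G^0$ is projective, but $|\sour\inv(x)|=2$ is not a unit. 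Any admissible $u$ has the form $u(g)=c(\ran(g))$ with $c(1)+c(2)=1$; for instance $c(1)=0$, $c(2)=1$ gives a $u$ vanishing on an entire range fibre. The point you deferred, constancy on all of $\sour\inv(x)$, is thus not a technicality to be settled via generation by $T$; it is false.

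With the correct invariance, the augmentation identity reads $1=\sum_{\sour(g)=x}c(\ran(g))$, and two further ingredients are needed, as in the paper.

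\textbf{Isotropy.} Split $\sour\inv(x)$ into orbits of the free right action of $\mathcal G^x_x$, on which $u$ is constant. This gives $1=|\mathcal G^x_x|\sum_{t\in\mathcal T}u(t)$ for a transversal $\mathcal T$. Finiteness of $\mathcal G^x_x$ is forced, since $u$ is nonzero on only finitely many points of each $\sour$-fibre.

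\textbf{Quasi-compactness.} This needs its own argument. The paper notes that $u=\sum_{i=1}^rc_i1_{U_i}$ is nonzero on at most $r$ points of any $\ran$-fibre. Choosing $a$ with $\sour(a)=x$ and $u(a)\neq0$, right invariance gives $u(ab)\neq 0$ for all $b\in\ran\inv(x)$, so $|\ran\inv(x)|\le r$. Thus $\mathcal G$ is uniformly bounded, so $S$ is locally finite, hence finite, by Proposition~\ref{p:uniformly.bdd.vs.local.finite}. Therefore $\mathcal G$ is a finite union of compact bisections.

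Alternatively, your support idea can be rescued directly. The set $Z=\{y\mid c(y)\neq 0\}$ is full and $\ran\inv(Z)\subseteq K=\bigcup_iU_i$. Each $g\in\mathcal G$ equals $h\inv(hg)$ with $h,hg\in\ran\inv(Z)$. Hence $\mathcal G\subseteq\bigcup_{i,j}U_i\inv U_j$ is quasi-compact, without invoking Proposition~\ref{p:uniformly.bdd.vs.local.finite}.
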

\begin{proof}
 Recall that $\mathcal G^0$ is compact and $R\mathcal G$ is spanned by the characteristic functions of elements of the inverse semigroup $T$ of  compact bisections $V$ with $V\subseteq U$ for some $U\in S$ by Proposition~\ref{p:generate}. We claim that $R\mathcal G^0$ is projective. It suffices to show that it is a finitely presented  $R\mathcal G$-module, as finitely presented flat modules are projective~\cite[Theorem~3.56]{Rotmanhom}.  
There is a surjective $R\mathcal G$-module homomorphism $\sour_*\colon R\mathcal G\to R\mathcal G^0$.  Since $R\mathcal G$ is a unital ring, as a right module it is cyclic generated by $1_{\mathcal G^0}$. Thus, it suffices to show that $\ker \sour_*$ is finitely generated.  But this follows from Lemma~\ref{l:generating.set} because $S$ is finitely generated.  We conclude that $R\mathcal G^0$ is projective.

It follows that $\sour_\ast$ splits, and hence there is an idempotent $e\in R\mathcal G$ with $\ker \sour_\ast=eR\mathcal G$.  
Let $f=1_{\mathcal G^0}-e$.  Then $f$ is an idempotent such that $f\ker\sour_\ast=(1_{\mathcal G^0}-e)eR\mathcal G=0$. Therefore, $f1_U=f1_{U\inv U}$ for all $U\in T$, as $1_U-1_{U\inv U}\in \ker\sour_*$.  If \[f=\sum_{i=1}^rc_i1_{U_i}\] with $U_i\in T$ compact bisections, then $f$ can be nonzero on at most $r$ elements from the same $\ran$-fiber, a crucial point.

Let $a\in \mathcal G$ and $b\in \mathcal G$ with $\sour(a)=\ran(b)$.  We claim that $f(ab)=f(a)$. Indeed, let $U\in S$ with $b\in U$.  Then $(f1_U)(ab) = f(a)$ and $(f1_{U\inv U})(ab)=f(ab)$.  We conclude that $f(ab)=f(a)$ for all $b\in \ran^{-1}(\sour(a))$.

%Notice that if $U\in T$ and $V\subseteq \mathcal G^0$ is compact open, then $1_V\cdot 1_U = 1_{U\inv VU}$ in the right module $\mathbb Z\mathcal G^0$.
%Let $\ran_*\colon R\mathcal G\to R\mathcal G^0$ be the left $R$-module homomorphism induced by the local homeomorphism $\ran$.  So \[\ran_*(f)(x)=\sum_{\ran(\gamma)=x}f(\gamma).\]  Note that if $U\in \Gamma_c(\mathcal G)$, then $\ran_*(1_U) =1_{UU\inv}$.   It is well known and easy to check that $\ran_*$ is a left $R\mathcal G$-module homomorphism where $R\mathcal G$ acts on $R\mathcal G^0$ via the rule \[f_1 \cdot g_1(x) = \sum_{\ran(\gamma)=x}f_1(\gamma)g_1(\sour(\gamma))\] for $f_1\in R\mathcal G$ and $g_1\in R\mathcal G^0$.  Notice that if $U\in \Gamma_c(\mathcal G)$ and $V\subseteq \mathcal G^0$ is compact open, then $1_U\cdot 1_V = 1_{UVU\inv}$.

We compute that $\sour_*(f) = \sour_*(1_{\mathcal G^0}-e) = \sour_*(1_{\mathcal G^0}) = 1_{\mathcal G^0}$, as $e\in \ker\sour_\ast$.  Let $x\in \mathcal G^0$.  Then
\begin{equation}\label{eq:augment}
1=\sour_*(f)(x) = \sum_{\sour(a)=x}f(a),
\end{equation}
 and so we can find $a\in \mathcal G$ with $\sour(a)=x$ and $f(a)\neq 0$.  Then, for every $b\in \ran^{-1}(x)$, we have $f(ab) =f(a)\neq 0$ by the previous paragraph.  Since $f$ is nonzero on at most $r$ elements with range $\ran(a)$, we deduce that $|\ran\inv(x)|\leq r$.   Thus $\mathcal G$ is uniformly bounded and hence has finite isotropy groups.
 The group $\mathcal G_x^x$ acts freely on the right of $\sour\inv (x)$, and so if we choose a transversal $\mathcal T$ for this action and use that $f(a'b)=f(a')$  for $\sour(a')=x$ and $b\in \mathcal G_x^x$, we obtain from \eqref{eq:augment}
\[1=\sum_{t\in \mathcal T}\sum_{g\in \mathcal G^x_x}f(tg)=|\mathcal G_x^x|\cdot \sum_{t\in \mathcal T} f(t).\] We conclude that $|\mathcal G_x^x|$ is a unit in $R$.  By Proposition~\ref{p:uniformly.bdd.vs.local.finite}, the finitely generated inverse semigroup $S$ is finite.  Therefore, $\mathcal G$ is a finite union of compact bisections, and hence quasi-compact.   This completes the proof.  
\end{proof}

\begin{proof}[Proof of Theorem~A]
To prove Theorem~A, first assume that $\mathcal G$ is AF.  Every compact principal ample groupoid has homological dimension $0$ by Theorem~\ref{t:hom.zero.prinipal}.  Therefore, $\mathcal G$ has homological dimension $0$ by Corollary~\ref{c:dir.hom.zero}.  

Conversely, suppose that $\mathcal G$ has homological dimension $0$.  Note that if $S$ denotes the inverse semigroup of compact bisections of $\mathcal G$, then $\mathcal G=\bigcup S$.  Now $S=\bigcup_{i\in D}S_i$ is the directed union of its finitely generated inverse subsemigroups $S_i$.  Put $\mathcal H_i=\bigcup S_i$, for $i\in D$.  Then $\mathcal H_i$ is an open subgroupoid and $\mathcal G=\bigcup_{i\in D}\mathcal H_i$ is a directed union.  By Corollary~\ref{c:shapiro.cor} each $\mathcal H_i$ has homological dimension $0$. Therefore, each $\mathcal H_i$ is a compact principal groupoid by Lemma~\ref{l:isotropy.control} since $\pm 1$ are the only units in $\mathbb Z$.
\end{proof}

\section{Proof of Theorem~B}
We begin with the proof of necessity in Theorem~B.  We first establish that the $R$-homological dimension of $\mathcal G$ is the minimum length of a flat resolution of the right $R\mathcal G$-module $R\mathcal G^0$.

\begin{Prop}\label{p:tensor.rg}
    Let $\mathcal G$ be an ample groupoid and $R$ a unital ring.  Let $M$ be a right $\mathbb Z\mathcal G$-module and $N$ a left $R\mathcal G$-module, both unitary.  Then, identifying $R\mathcal G$ and $R\otimes_{\mathbb Z}\mathbb Z\mathcal G$, there is an isomorphism $M\otimes_{\mathbb Z\mathcal G}N\cong (M\otimes_{\mathbb Z}R)\otimes_{R\mathcal G}N$, natural in $M$ and $N$.
\end{Prop}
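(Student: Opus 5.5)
The plan is to go through the coequalizer description of tensor products recalled before Theorem~\ref{t:dlim.hom}, together with Li's identification $R\mathcal G\cong R\otimes_{\mathbb Z}\mathbb Z\mathcal G$.

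First, I will make the bimodule structures explicit. The group $M\otimes_{\mathbb Z}R$ is a right $R\otimes_{\mathbb Z}\mathbb Z\mathcal G$-module via $(m\otimes r)(s\otimes f)=mf\otimes rs$; this is well defined because the $R$-action and the $\mathbb Z\mathcal G$-action commute, which is clear in $R\otimes_{\mathbb Z}\mathbb Z\mathcal G$. Likewise, $N$ is a left $\mathbb Z\mathcal G$-module through the ring map $\mathbb Z\mathcal G\to R\mathcal G$, $f\mapsto 1\otimes f$. By the remark after Li's result, the $R$-action $r\cdot n=(r\otimes 1_U)n$ (for any compact open $U$ with $1_Un=n$, using unitarity) commutes with the $\mathbb Z\mathcal G$-action.

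Next, I will write down the two mutually inverse maps. Define
\[\Phi\colon M\otimes_{\mathbb Z\mathcal G}N\to (M\otimes_{\mathbb Z}R)\otimes_{R\mathcal G}N,\qquad m\otimes n\mapsto (m\otimes 1)\otimes n.\]
This is balanced over $\mathbb Z\mathcal G$ because $(mf\otimes 1)\otimes n=(m\otimes 1)(1\otimes f)\otimes n=(m\otimes 1)\otimes fn$. In the other direction, define
\[\Psi\colon (m\otimes r)\otimes n\mapsto m\otimes rn.\]
This is additive in $m\otimes r$ since $M\otimes_{\mathbb Z}R$ is a tensor over $\mathbb Z$. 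To check that $\Psi$ is balanced over $R\mathcal G$, it suffices to check generators $s\otimes f$:
\[\Psi\bigl((m\otimes r)(s\otimes f)\otimes n\bigr)=mf\otimes rsn=m\otimes f(rs n)=m\otimes r(s\otimes f)n,\]
where the middle step uses that the $R$-action commutes with $f$. The composite $\Psi\Phi$ is the identity. For $\Phi\Psi$ we have $(m\otimes 1)\otimes rn=(m\otimes r)\otimes n$, which needs $rn=(r\otimes 1_U)n$ and $(m\otimes 1)(r\otimes 1_U)=m1_U\otimes r=m\otimes r$ for a suitable $U$ with $m1_U=m$ and $1_Un=n$. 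Unitarity of both modules supplies such a $U$: take $U$ large enough to serve finitely many elements at once, since each element is fixed by $1_U$ for $U$ large. Naturality in $M$ and $N$ is evident from the formulas.

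The main obstacle is that $R\mathcal G$ may be non-unital, so the "$R$-module structure" on $N$ and the element "$1$" must be handled with local units $1_U$. The key point to verify carefully is that $r\cdot n:=(r\otimes 1_U)n$ is independent of $U$ and commutes with $\mathbb Z\mathcal G$. Both follow from $1_U1_V=1_{U\cap V}$ and from the fact that $r\otimes 1_U$ is central-like against $1\otimes f$ up to local units: $(r\otimes 1_U)(1\otimes f)=r\otimes 1_Uf=(1\otimes 1_Uf)(r\otimes 1_{V})$ for $V$ with $1_Uf1_V=1_Uf$.
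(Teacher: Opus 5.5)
Your proof is correct, but it takes a more hands-on route than the paper. The paper writes no explicit maps. It strings together canonical isomorphisms
\[(M\otimes_{\mathbb Z}R)\otimes_{R\mathcal G}N\cong (M\otimes_{\mathbb Z\mathcal G}\mathbb Z\mathcal G\otimes_{\mathbb Z}R)\otimes_{R\mathcal G}N\cong M\otimes_{\mathbb Z\mathcal G}R\mathcal G\otimes_{R\mathcal G}N\cong M\otimes_{\mathbb Z\mathcal G}N,\]
justified as follows:
\begin{enumerate}
\item the left-right dual of Lemma~\ref{l:ideal.nice}(1), with $U=\mathcal G^0$, gives $M\cong M\otimes_{\mathbb Z\mathcal G}\mathbb Z\mathcal G$;
\item Li's identification $\mathbb Z\mathcal G\otimes_{\mathbb Z}R\cong R\mathcal G$, together with associativity of $\otimes$, gives the middle step;
\item Lemma~\ref{l:ideal.nice}(1) itself gives $R\mathcal G\otimes_{R\mathcal G}N\cong N$.
\end{enumerate}
So all the local-unit bookkeeping is packaged inside Lemma~\ref{l:ideal.nice}. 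That lemma's proof is exactly the ``choose one $1_U$ fixing finitely many elements'' trick you use.

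Your maps $\Phi$ and $\Psi$ redo that bookkeeping inline, and each check is sound:
\begin{enumerate}
\item $r\cdot n=(r1_U)n$ is independent of $U$ and commutes with the $\mathbb Z\mathcal G$-action;
\item $\Psi$ is $R\mathcal G$-balanced, and checking this on the spanning elements $s\otimes f$ suffices;
\item a single $U$ with $m1_U=m$ and $1_Un=n$ gives $\Phi\Psi=\mathrm{id}$.
\end{enumerate}

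Your version makes explicit some structure the paper leaves implicit: that a unitary $R\mathcal G$-module is unitary over $\mathbb Z\mathcal G$, and how $M\otimes_{\mathbb Z}R$ becomes a right $R\mathcal G$-module. It also produces the concrete formula $m\otimes n\mapsto (m\otimes 1)\otimes n$, from which naturality is immediate. The paper's version is shorter because it reuses a lemma already on hand.

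Despite your opening sentence, you never actually use the coequalizer description of the tensor product. Your argument is a direct construction of mutually inverse maps.
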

\begin{proof}
Indeed, we have that $(M\otimes_{\mathbb Z}R)\otimes_{R\mathcal G}N\cong (M\otimes_{\mathbb Z\mathcal G}\mathbb Z\mathcal G\otimes_{\mathbb Z}R)\otimes_{R\mathcal G}N\cong M\otimes_{\mathbb ZG}R\mathcal G\otimes_{R\mathcal G} N\cong M\otimes_{\mathbb Z\mathcal G} N$ by Lemma~\ref{l:ideal.nice} and its left-right dual.
\end{proof}

An immediate corollary is  the following.
\begin{Cor}\label{c:stay.flat.es}
    Let $F$ be a flat unitary right $\mathbb Z\mathcal G$-module.  Then $F\otimes_{\mathbb Z} R$ is a flat unitary right $R\mathcal G$-module.
\end{Cor}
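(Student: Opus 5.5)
Unitarity and flatness are handled separately, both read off from Proposition~\ref{p:tensor.rg}. Throughout, $R\mathcal G$ is identified with $R\otimes_{\mathbb Z}\mathbb Z\mathcal G$ via Li's isomorphism, so $F\otimes_{\mathbb Z}R$ is a right $R\mathcal G$-module through the right $\mathbb Z\mathcal G$-action on $F$ and right multiplication on $R$. Strictly, this makes $F\otimes_{\mathbb Z}R$ a right $\mathbb Z\mathcal G\otimes_{\mathbb Z}R$-module; under $R\mathcal G\cong R\otimes_{\mathbb Z}\mathbb Z\mathcal G$ one swaps the tensor factors. This is harmless because $\mathbb Z\mathcal G$ and $R$ act on different tensor factors, so their actions commute.

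\emph{Unitarity.} Let $\sum_j x_j\otimes r_j\in F\otimes_{\mathbb Z}R$. Since $F$ is unitary, each $x_j=\sum_k y_{jk}f_{jk}$ with $f_{jk}\in\mathbb Z\mathcal G$. Choose a compact open $K\subseteq \mathcal G^0$ with $f_{jk}1_K=f_{jk}$ for all $j,k$; this is possible since there are finitely many $f_{jk}$, each with compact support. Then $x_j1_K=x_j$ for every $j$, and so
\[
\sum_j x_j\otimes r_j=\Bigl(\sum_j x_j\otimes r_j\Bigr)(1\otimes 1_K),
\]
because $1\otimes 1_K$ acts as $x\otimes r\mapsto x1_K\otimes r$. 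Hence $(F\otimes_{\mathbb Z}R)\cdot R\mathcal G=F\otimes_{\mathbb Z}R$.

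\emph{Flatness.} Let $0\to N'\to N\to N''\to 0$ be a short exact sequence of unitary left $R\mathcal G$-modules. By Proposition~\ref{p:tensor.rg} and its naturality in $N$, applying $(F\otimes_{\mathbb Z}R)\otimes_{R\mathcal G}-$ gives a sequence isomorphic to the one obtained by applying $F\otimes_{\mathbb Z\mathcal G}-$. Each $N$ is a unitary $\mathbb Z\mathcal G$-module via $\mathbb Z\mathcal G\subseteq R\mathcal G$: the element $1_K$ lies in $\mathbb Z\mathcal G$, and any $fn$ with $f\in R\mathcal G$ equals $1_Kfn$ for a suitable compact open $K$. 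So the sequence is exact because $F$ is flat over $\mathbb Z\mathcal G$. Therefore $(F\otimes_{\mathbb Z}R)\otimes_{R\mathcal G}-$ is exact on unitary modules, which is the notion of flatness used in the paper.

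The main obstacle is the mismatch between this and flatness on all $R\mathcal G$-modules, should that be required. For an arbitrary left module $N$, the right-hand analogue of Lemma~\ref{l:ideal.nice}(1) together with unitarity of $F\otimes_{\mathbb Z}R$ gives
\[
(F\otimes_{\mathbb Z}R)\otimes_{R\mathcal G}N\cong (F\otimes_{\mathbb Z}R)\otimes_{R\mathcal G}R\mathcal G N.
\]
The functor $N\mapsto R\mathcal G N$ preserves monomorphisms, and every quotient of a unitary module is unitary. Hence exactness on unitary modules gives preservation of injections in general, which is flatness.
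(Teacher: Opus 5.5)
Your proof is correct and is the paper's argument: the natural isomorphism $(F\otimes_{\mathbb Z}R)\otimes_{R\mathcal G}-\cong F\otimes_{\mathbb Z\mathcal G}-$ of Proposition~\ref{p:tensor.rg} carries exactness from one functor to the other. The paper leaves implicit both the unitarity check and the reduction from arbitrary to unitary left modules, and you supply both; the one thing to fix is that $\mathbb Z\mathcal G\subseteq R\mathcal G$ should be read as the ring map $f\mapsto 1\otimes f$, which need not be injective.
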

\begin{proof}
    By Proposition~\ref{p:tensor.rg}  $(F\otimes_{\mathbb Z} R)\otimes_{R\mathcal G}(-)\cong F\otimes_{\mathbb Z\mathcal G}(-)$, and hence is exact from which the result follows.
\end{proof}

The next corollary shows that if $M$ is a unitary left $R\mathcal G$-module, then $H_n(\mathcal G,M)\cong \Tor^{R\mathcal G}_n(R\mathcal G^0, M)$.  

\begin{Cor}\label{c:weakdim}
    Let $R$ be a unital ring, $\mathcal G$ an ample groupoid and $M$ a unitary left $R\mathcal G$-module.  Then $H_n(\mathcal G,M)\cong \Tor^{R\mathcal G}_n(R\mathcal G^0, M)$.  Consequently, the $R$-homological dimension of $\mathcal G$ is the shortest length of a flat resolution of the right $R\mathcal G$-module $R\mathcal G^0$.
\end{Cor}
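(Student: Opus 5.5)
The plan is to tensor the bar resolution with $R$ and apply Proposition~\ref{p:tensor.rg} and Corollary~\ref{c:stay.flat.es}. Concretely, let $B_\bullet(\mathcal G)\to \mathbb Z\mathcal G^0\to 0$ be the bar resolution, consisting of the flat unitary right $\mathbb Z\mathcal G$-modules $\mathbb Z\mathcal G^{n+1}$. By Corollary~\ref{c:stay.flat.es}, each $\mathbb Z\mathcal G^{n+1}\otimes_{\mathbb Z}R$ is a flat unitary right $R\mathcal G$-module. By Li's result, $\mathbb Z\mathcal G^{n+1}\otimes_{\mathbb Z}R\cong R\mathcal G^{n+1}$, and likewise $\mathbb Z\mathcal G^0\otimes_{\mathbb Z}R\cong R\mathcal G^0$. (Li's statement is for $R\otimes_{\mathbb Z}\mathbb ZX$, but $\mathbb ZX$ is free abelian, so the two sides agree.)

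The key check is that $B_\bullet(\mathcal G)\otimes_{\mathbb Z}R\to R\mathcal G^0\to 0$ is still exact. This holds because every module in the augmented complex $B_\bullet(\mathcal G)\to\mathbb Z\mathcal G^0\to 0$ is a free abelian group ($\mathbb ZX$ is free for $X$ with a basis of compact open sets). An exact complex of free, hence flat, abelian groups that is bounded on the right stays exact after applying $-\otimes_{\mathbb Z}R$: the kernels are subgroups of free groups, hence free, so the complex splits into short exact sequences of free groups that remain exact. Alternatively, the contracting homotopy of the bar resolution given by the maps $(h_n)_*$ is $\mathbb Z$-linear and survives tensoring. So $B_\bullet(\mathcal G)\otimes_{\mathbb Z}R$ is a flat resolution of $R\mathcal G^0$ by unitary right $R\mathcal G$-modules.

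Now compute. By the naturality in Proposition~\ref{p:tensor.rg}, there is an isomorphism of chain complexes
\[B_\bullet(\mathcal G)\otimes_{\mathbb Z\mathcal G}M\cong (B_\bullet(\mathcal G)\otimes_{\mathbb Z}R)\otimes_{R\mathcal G}M.\]
The homology of the left side is $H_n(\mathcal G,M)$, and the homology of the right side is $\Tor_n^{R\mathcal G}(R\mathcal G^0,M)$, since Tor may be computed by flat resolutions. This is the point that needs the most care: Tor over $R\mathcal G$ should be taken within the category of unitary modules, or over the unital ring when $\mathcal G^0$ is compact. Tor computed by flat unitary resolutions agrees with the usual one because unitary flat modules are flat and $R\mathcal G\otimes_{R\mathcal G}N\cong N$ for unitary $N$, by the remark after Lemma~\ref{l:ideal.nice}.

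For the consequence, I would invoke~\cite[Proposition~8.17]{Rotmanhom}. The $R$-homological dimension is the supremum of the $n$ with $\Tor_n^{R\mathcal G}(R\mathcal G^0,M)\neq 0$ for some unitary $M$, which is the flat dimension of $R\mathcal G^0$. That flat dimension equals the shortest length of a flat resolution, by dimension shifting, applied as for the $\mathbb Z$ case in the paper.
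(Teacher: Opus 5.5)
Your proposal is correct and is essentially the paper's proof: tensor the bar resolution with $R$, get flatness from Corollary~\ref{c:stay.flat.es} and exactness from $\mathbb Z$-flatness of the terms, identify the two complexes via Proposition~\ref{p:tensor.rg}, and finish with \cite[Proposition~8.17]{Rotmanhom}. The only real difference is that you rely on $\mathbb ZX$ being free abelian, which is true (e.g.\ via N\"obeling's theorem on bounded $\mathbb Z$-valued functions) but not obvious in the non-Hausdorff case; the paper needs only that $\mathbb ZX\subseteq \mathbb Z^X$ is torsion-free, hence flat, which already gives $H_n(B_\bullet(\mathcal G)\otimes_{\mathbb Z}R)\cong\Tor_n^{\mathbb Z}(\mathbb Z\mathcal G^0,R)=0$ for $n\geq 1$.
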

\begin{proof}
    We claim that $B(\mathcal G)_\bullet\otimes_{\mathbb Z} R\to \mathbb Z\mathcal G^0\otimes_{\mathbb Z} R\cong R\mathcal G^0$ is a flat resolution over $R\mathcal G$.   Indeed,  $B(\mathcal G)_\bullet\otimes_{\mathbb Z} R$ is a chain complex of unitary flat $R\mathcal G$-modules by Corollary~\ref{c:stay.flat.es}.   We just need that the homology of this complex vanishes in degree $n\geq 1$.  Notice that $B_\bullet(\mathcal G)\to \mathbb Z\mathcal G^0$ is a resolution of the torsion-free (hence flat) abelian group $\mathbb Z\mathcal G^0$ by the torsion-free (hence flat) abelian groups $\mathbb Z\mathcal G^n$.  Therefore, $H_n(B_\bullet(\mathcal G)\otimes_{\mathbb Z} R)\cong \Tor^\mathbb Z_n(\mathbb Z\mathcal G^0,R)=0$, for $n\geq 1$.   We conclude that  $B(\mathcal G)_\bullet\otimes_{\mathbb Z} R\to R\mathcal G^0$ is a flat resolution.  Using this resolution to compute Tor, we obtain
    \begin{align*}
        \Tor_n^{R\mathcal G}(R\mathcal G^0,M)&\cong H_n((B_\bullet(\mathcal G)\otimes_{\mathbb Z} R)\otimes_{R\mathcal G} M)\\ &\cong  H_n(B_\bullet(\mathcal G)\otimes_{\mathbb Z\mathcal G} M)\cong H_n(\mathcal G,M)
    \end{align*}
    by Proposition~\ref{p:tensor.rg}.
The final statement follows from the isomorphism and~\cite[Proposition~8.17]{Rotmanhom}.
\end{proof}

Flatness of $R\mathcal G^0$ is then equivalent to $R$-homological dimension $0$.

\begin{Cor}\label{c:flat.rg}
An ample groupoid $\mathcal G$ has $R$-homological dimension $0$ if and only $R\mathcal G^0$ is a flat right $R\mathcal G$-module.
\end{Cor}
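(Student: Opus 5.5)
This is a direct consequence of Corollary~\ref{c:weakdim}, so the plan is to unwind that corollary in both directions.

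Suppose first that $R\mathcal G^0$ is a flat right $R\mathcal G$-module. Then for every unitary left $R\mathcal G$-module $M$ and every $n\geq 1$, Corollary~\ref{c:weakdim} gives
\[H_n(\mathcal G,M)\cong \Tor^{R\mathcal G}_n(R\mathcal G^0,M).\]
The right-hand side vanishes because $R\mathcal G^0$ is flat, so it serves as its own flat resolution of length $0$. Hence the $R$-homological dimension is $0$.

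Conversely, suppose the $R$-homological dimension is $0$. Then $\Tor^{R\mathcal G}_1(R\mathcal G^0,M)=0$ for every unitary left $R\mathcal G$-module $M$, again by Corollary~\ref{c:weakdim}. The one point needing care is that flatness usually means $\Tor_1$ vanishes against all left modules, while here we only know it for unitary ones. There are two ways to handle this.

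The first is the final statement of Corollary~\ref{c:weakdim}, which says the $R$-homological dimension equals the shortest length of a flat resolution by unitary modules. Dimension $0$ then means $R\mathcal G^0$ itself is flat.

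The second is a direct argument. Since $R\mathcal G^0$ is unitary, Lemma~\ref{l:ideal.nice} (with $U=\mathcal G^0$) gives
\[R\mathcal G^0\otimes_{R\mathcal G}N\cong R\mathcal G^0\otimes_{R\mathcal G}(R\mathcal G\cdot N)\]
for any left module $N$. The functor $N\mapsto R\mathcal G\cdot N\cong R\mathcal G\otimes_{R\mathcal G}N$ is exact because $R\mathcal G$ has local units, so it is a direct limit of the projectives $R\mathcal G1_U$. Given an injection $N'\to N$, the map $R\mathcal G\cdot N'\to R\mathcal G\cdot N$ is an injection of unitary modules. Tensoring with $R\mathcal G^0$ preserves it, since $\Tor_1$ vanishes on the unitary cokernel. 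Therefore $R\mathcal G^0$ is flat.

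The only real obstacle is this unitary-versus-arbitrary module issue, and it is already absorbed by the cited Rotman proposition and the local-units exactness.
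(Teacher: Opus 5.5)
Your proposal is correct and matches the paper. The paper also deduces this corollary immediately from Corollary~\ref{c:weakdim}, using the isomorphism $H_n(\mathcal G,M)\cong\Tor_n^{R\mathcal G}(R\mathcal G^0,M)$ and the characterization of the $R$-homological dimension as the shortest length of a flat resolution of $R\mathcal G^0$, exactly as in your first way.

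Your supplementary direct argument is also fine, with one left/right slip. The decomposition relevant to exactness of $R\mathcal G\otimes_{R\mathcal G}(-)$ is into the right ideals $1_UR\mathcal G$, not $R\mathcal G1_U$. In any case, injectivity of $R\mathcal G\cdot N'\to R\mathcal G\cdot N$ is just the restriction of an injection, so you do not need that step.
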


We can now prove necessity in Theorem~B.

\begin{Thm}\label{t:necessity}
    Let $R$ be a unital ring and $\mathcal G$ an ample groupoid of $R$-homological dimension $0$.  Then:
    \begin{enumerate}
        \item $\mathcal G$ is a directed union of quasi-compact open subgroupoids;
        \item The order of any finite subgroup of an isotropy group is a unit in $R$. 
    \end{enumerate}
\end{Thm}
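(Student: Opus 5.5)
The argument mirrors the converse direction in the proof of Theorem~A, with Lemma~\ref{l:isotropy.control} doing the real work and Corollary~\ref{c:flat.rg} replacing the $\mathbb Z$-specific reasoning.

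\textbf{Exhausting $\mathcal G$ by quasi-compact pieces.} Let $S$ be the inverse semigroup of all compact bisections of $\mathcal G$, so $\mathcal G=\bigcup S$. Write $S=\bigcup_{i\in D}S_i$ as the directed union of its finitely generated inverse subsemigroups, and put $\mathcal H_i=\bigcup S_i$. Each $\mathcal H_i$ is an open subgroupoid, and $\mathcal G=\bigcup_{i\in D}\mathcal H_i$ is a directed union.

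\textbf{Flatness passes to the pieces.} By Corollary~\ref{c:shapiro.cor}, each $\mathcal H_i$ has $R$-homological dimension $0$. By Corollary~\ref{c:flat.rg}, $R\mathcal H_i^0$ is then a flat right $R\mathcal H_i$-module.

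\textbf{Applying Lemma~\ref{l:isotropy.control}.} Since $\mathcal H_i=\bigcup S_i$ with $S_i$ a finitely generated inverse semigroup of compact open bisections, the lemma gives two conclusions:
\begin{enumerate}
\item $\mathcal H_i$ is quasi-compact, which proves (1);
\item $|(\mathcal H_i)^x_x|$ is a unit in $R$ for every $x\in\mathcal H_i^0$.
\end{enumerate}

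\textbf{Condition (2).} Let $K$ be a finite subgroup of $\mathcal G^x_x$. I will realize $K$ inside a single isotropy group of some $\mathcal H_i$ and then compare orders. Each element of $K$ lies in some compact bisection, and there are only finitely many elements, so by directedness $K\subseteq\mathcal H_i$ for some $i$. Since $\mathcal H_i$ is a subgroupoid containing $x$, $K$ is a subgroup of the finite group $(\mathcal H_i)^x_x$. By Lagrange, $|K|$ divides $|(\mathcal H_i)^x_x|$, say $|(\mathcal H_i)^x_x|=|K|\,m$. The element $|K|\cdot 1_R$ is central in $R$ and has the two-sided inverse $m\cdot u$, where $u$ is the inverse of $|(\mathcal H_i)^x_x|\cdot 1_R$. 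Hence $|K|$ is a unit in $R$.

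\textbf{Where the difficulty lies.} The substantive input is Lemma~\ref{l:isotropy.control}, which is already available. The only points needing care are getting the finite subgroup $K$ inside a single $\mathcal H_i$, which uses directedness, and the divisibility argument. Note that $\mathcal G^x_x$ itself may be infinite, so condition (2) can only be stated for finite subgroups, exactly as in the theorem.
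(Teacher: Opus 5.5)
Your proposal is correct and follows essentially the same argument as the paper: exhaust $\mathcal G$ by the $\mathcal H_i=\bigcup S_i$ for the finitely generated inverse subsemigroups $S_i$, pass $R$-homological dimension $0$ to each $\mathcal H_i$ via Corollary~\ref{c:shapiro.cor}, and then apply Corollary~\ref{c:flat.rg} and Lemma~\ref{l:isotropy.control}. The only difference is that you spell out the step hidden in the paper's final sentence: containing $K$ in some $\mathcal H_i$ and using divisibility of $|(\mathcal H_i)^x_x|$ by $|K|$, with centrality of integer multiples of $1_R$, to conclude that $|K|$ is a unit.
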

\begin{proof}
    If $S$ denotes the inverse semigroup of compact bisections of $\mathcal G$, then $\mathcal G=\bigcup S$.  But, $S=\bigcup_{i\in D}S_i$ is the directed union of its finitely generated inverse subsemigroups $S_i$.  Set $\mathcal H_i=\bigcup S_i$ for $i\in D$.  Then $\mathcal H_i$ is an open subgroupoid and $\mathcal G=\bigcup_{i\in D}\mathcal H_i$ is a directed union.  By Corollary~\ref{c:shapiro.cor} each $\mathcal H_i$ has $R$-homological dimension $0$ and hence  is quasi-compact with isotropy groups having order a unit in $R$ by Corollary~\ref{c:flat.rg} and Lemma~\ref{l:isotropy.control}.  Since each finite subgroup of isotropy is contained in some $\mathcal H_i$, the result follows.
\end{proof}

To prove the converse to the above theorem, we proceed in steps following the ideas of~\cite{regulargpd}. %  A groupoid is a \emph{group bundle} if it consists of just isotropy.
A bisection $U\subseteq \mathcal G$ is \emph{invertible} if $UU\inv=\mathcal G^0=U\inv U$.

\begin{Thm}\label{t:grp.bundle}
    Let $\mathcal G$ be a quasi-compact ample group bundle such that the order of each isotropy group  is a unit in $R$.   Then $\mathcal G$ has $R$-homological dimension $0$.
\end{Thm}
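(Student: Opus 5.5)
By Corollary~\ref{c:flat.rg}, it is enough to show that $R\mathcal G^0$ is a flat right $R\mathcal G$-module. I will in fact show it is projective, by splitting the surjection $\sour_*\colon R\mathcal G\to R\mathcal G^0$ with an explicit idempotent. This reverses the proof of Lemma~\ref{l:isotropy.control}. The natural candidate is the normalized averaging element
\[ f(g)=|\mathcal G^{\sour(g)}_{\sour(g)}|\inv \qquad (g\in\mathcal G). \]
Because $\mathcal G$ is a group bundle, we have $\ran=\sour$, so the fibre $\sour\inv(x)$ is exactly the isotropy group $\mathcal G^x_x$. The function $f$ then satisfies $\sour_*(f)=1_{\mathcal G^0}$ by construction. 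We will also check that $f*f=f$ and $f*1_U=f*1_{U\inv U}$ for every compact bisection $U$. If so, the map $h\mapsto fh$ gives a splitting: it is $R\mathcal G$-linear, and $f$ generates a complement of $\ker\sour_*$, exactly as $1_{\mathcal G^0}-e$ did in Lemma~\ref{l:isotropy.control}.

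\textbf{Step 1: $f\in R\mathcal G$.} This is the main obstacle, since $\mathcal G$ need not be Hausdorff. Since $\mathcal G$ is quasi-compact, it is uniformly bounded by \cite[Lemma~3.3]{regulargpd}. I plan to cover $\mathcal G^0$ by finitely many disjoint compact open sets $W$. On each $W$, the restriction $\mathcal G_W$ should be a finite union of compact bisections $U_1,\dots,U_k$ with $\sour(U_i)=W$, one through each arrow, and $|\mathcal G^x_x|=n_W$ should be constant on $W$.

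To arrange this, take $x\in\mathcal G^0$ and list the arrows $g_1,\dots,g_n$ of $\mathcal G^x_x$. Choose disjoint germs, that is, compact bisections $U_i\ni g_i$, shrunk so that they all have the same source $W\ni x$. Quasi-compactness, combined with $\sour$ being a local homeomorphism, should show that after shrinking $W$ these $U_i$ cover $\mathcal G_W$. Otherwise there would be a net of arrows outside $\bigcup U_i$ with sources converging to $x$, and a cluster point of this net would lie in $\mathcal G^x_x$, hence inside some open $U_i$, which is a contradiction. Shrinking $W$ further, the $U_i$ can be made pairwise disjoint, since distinct germs at a point separate under the local homeomorphism. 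Then $|\mathcal G^y_y|=n$ for all $y\in W$. Using compactness of $\mathcal G^0$, finitely many such $W$ can be refined to a disjoint family. Hence $f=\sum_W n_W\inv\sum_i 1_{U_i^W}$ lies in $R\mathcal G$, since $n_W$ is a unit in $R$.

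\textbf{Step 2: identities.} For $g\in\mathcal G^x_x$ we compute
\[ (f*f)(g)=\sum_{a\in\mathcal G^x_x}n_x^{-2}=n_x\inv=f(g), \]
so $f$ is idempotent. Likewise $(f*1_U)(g)$ equals $n_x\inv$ when $x\in U\inv U$ and $0$ otherwise, which is also the value of $f*1_{U\inv U}$. So $f$ annihilates each generator $1_U-1_{U\inv U}$ of $\ker\sour_*$ (Lemma~\ref{l:generating.set}), and therefore $f\ker\sour_*=0$.

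\textbf{Step 3: projectivity.} Define $\sigma\colon R\mathcal G^0\to R\mathcal G$ by $\sigma(\sour_*(h))=fh$. This is well defined because $f\ker\sour_*=0$, and it is a right module map. Moreover $\sour_*(fh)=\sour_*(f)h=1_{\mathcal G^0}h=\sour_*(h)$, because $\sour_*$ is a right module homomorphism and $\sour_*(f)=1_{\mathcal G^0}$. Hence $\sour_*\circ\sigma$ is the identity, and $R\mathcal G^0\cong fR\mathcal G$ is a direct summand of $R\mathcal G$. It is therefore projective, hence flat, and Corollary~\ref{c:flat.rg} completes the argument.
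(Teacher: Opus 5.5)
\textbf{The gap is in Step 1.} Your Steps 2 and 3 are correct. Your $f$ is in fact exactly the paper's idempotent. The paper takes a finite group $H$ of invertible compact bisections with $\mathcal G=\bigcup H$ and sets $e=|H|\inv\sum_{V\in H}1_V$. Since $V\mapsto V\cap\mathcal G_x$ is a surjective homomorphism $H\to\mathcal G^x_x$, each arrow over $x$ lies in exactly $|H|/|\mathcal G^x_x|$ members of $H$, so $e=f$.

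The failure occurs precisely in the non-Hausdorff case you flagged. You claim that after shrinking $W$ the $U_i$ become pairwise disjoint, so that $|\mathcal G^y_y|=n$ on $W$. This is false in general. Two distinct arrows of $\mathcal G^x_x$ need not have disjoint neighborhoods, i.e.\ $x$ can lie in the closure of $\sour(U_i\cap U_j)$, and the isotropy order need not be locally constant. Concretely, let $X=\{0\}\cup\{1/n:n\geq 1\}$ and let $\mathcal G$ be the quotient of $X\times\mathbb Z/2$ identifying $(1/n,0)$ with $(1/n,1)$ for every $n$. This is a quasi-compact ample group bundle: it is the union of the two compact bisections $U_1,U_2$ given by the images of the two sheets. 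Its isotropy is $\mathbb Z/2$ at $0$ and trivial elsewhere, and every neighborhood of one arrow over $0$ meets every neighborhood of the other. So no decomposition of the kind your formula requires exists, even though $f=\frac12(1_{U_1}+1_{U_2})$ does lie in $R\mathcal G$ when $2$ is a unit in $R$. As written, your argument is valid only for Hausdorff $\mathcal G$, whereas the statement (and its use in Theorem~B) allows non-Hausdorff groupoids.

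\textbf{How to repair it.} Replace disjointness by the group structure. Two compact bisections through the same arrow agree on a neighborhood of that arrow. Hence you may shrink $W$ finitely often so that $U_iU_jW=U_kW$ whenever $g_ig_j=g_k$. Then, for each $y\in W$, sending $g_i$ to the arrow of $U_i$ over $y$ defines a homomorphism $\mathcal G^x_x\to\mathcal G^y_y$. It is surjective because the $U_i$ cover $\mathcal G_W$. Consequently every arrow over $y$ lies in exactly $n/|\mathcal G^y_y|$ of the $U_i$, and $f|_{\mathcal G_W}=n\inv\sum_i 1_{U_iW}$, with $n=|\mathcal G^x_x|$ a unit. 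With this replacement the rest of your argument goes through.

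\textbf{The paper's route.} The paper avoids the local analysis entirely:
\begin{enumerate}
\item It enlarges each compact bisection $U$ to the invertible compact bisection $U\cup(\mathcal G^0\setminus\sour(U))$.
\item It uses local finiteness (Proposition~\ref{p:uniformly.bdd.vs.local.finite}) to obtain the finite group $H$.
\item It observes that every prime dividing $|H|$ divides some isotropy order, so $|H|$ is a unit.
\end{enumerate}
Then $e\in R\mathcal G$, $e^2=e$ and $e1_V=e$ are immediate. Lemma~\ref{l:generating.set}, applied with generating set $H$, gives $eR\mathcal G\cong R\mathcal G^0$ exactly as in your Step 3.
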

\begin{proof}
    Note that $\mathcal G^0=\sour(\mathcal G)$ is compact and each isotropy group is finite.  Let $G$ be the group of all invertible compact bisections.   If $g\in \mathcal G$ and $U$ is a compact bisection containing $g$, then $V=U\cup (\mathcal G^0\setminus \sour(U))$ belongs to $G$ and contains $g$.  Thus $\mathcal G=\bigcup G$.  Since $\mathcal G$ is quasi-compact, we can cover it by finitely many elements of $G$.  These elements generate a finite subgroup $H$ of $G$, by Proposition~\ref{p:uniformly.bdd.vs.local.finite}, with $\mathcal G=\bigcup H$.

    Now $R\mathcal G$ and $R\mathcal G^0$ are cyclic $R\mathcal G$-modules generated by $1_{\mathcal G^0}$.  If we can find an idempotent $e\in R\mathcal G$ with $\sour_\ast(e)=1_{\mathcal G^0}$ and $e(1_V-1_{\mathcal G^0})=0$ for all $V\in H$, then it will follow from Lemma~\ref{l:generating.set} that $eR\mathcal G\cong R\mathcal G^0$, and therefore $R\mathcal G^0$ will be projective, hence flat. 

    We claim that $|H|$ is invertible in $R$.   If $p$ is a prime divisor of $|H|$, then we can find $V\in H$ of order $p$. Let $g\in V\setminus \mathcal G^0$. From $V^p=\mathcal G^0$, we deduce that  $g^p=\sour(g)$.   Thus $p\mid |\mathcal G_x^x|$, and hence $p$ is invertible in $R$.  Since all prime divisors of $|H|$ are units in $R$, we deduce that $|H|$ is a unit of $R$. 

    Let $e=\frac{1}{|H|}\sum_{V\in H}1_V$.  Then $e1_V=e$ for all $V\in H$, and so $e^2=e$ and $e(1_V-1_{\mathcal G^0})=0$ for all $V\in H$.  This completes the proof that $R\mathcal G^0$ is flat, and hence $\mathcal G$ has $R$-homological dimension $0$ by Corollary~\ref{c:flat.rg}. 
\end{proof}

Next we consider the case of a quasi-compact groupoid whose orbits all have the same size. Recall that a quasi-compact groupoid has finite orbits.

\begin{Prop}\label{p:local.homeo.equal.orbit}
    Let $\mathcal G$ be a quasi-compact ample groupoid such that all orbits of $\mathcal G$ have the same size.  Let $\mathcal R$ be the orbit equivalence relation.  Then $\mathcal G^0/\mathcal R$ is compact and totally disconnected, and the projection $q\colon \mathcal G^0\to \mathcal G^0/\mathcal R$ is a local homeomorphism.
\end{Prop}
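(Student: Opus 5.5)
We show that $q$ is open and locally injective, and then that $\mathcal G^0/\mathcal R$ is Hausdorff; compactness and total disconnectedness then follow as in the proof of Theorem~\ref{t:hom.zero.prinipal}.

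First, $q$ is open. If $W\subseteq\mathcal G^0$ is open, then $q\inv(q(W))=\ran(\mathcal G_W)=\ran(\sour\inv(W))$. This set is open because $\sour$ is continuous and $\ran$ is an open map, being a local homeomorphism. Hence $\mathcal G^0/\mathcal R$ is quasi-compact, as the image of the compact space $\mathcal G^0$.

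Next, $q$ is locally injective. Let $n$ be the common orbit size and fix $x\in\mathcal G^0$ with orbit $\{x=x_1,\ldots,x_n\}$. Choose compact open bisections $U_1,\ldots,U_n$ with $\sour(U_i)\ni x$ and $\ran(U_i)\ni x_i$. Shrinking to a compact open neighbourhood $W\subseteq\bigcap_i\sour(U_i)$ of $x$, we may assume the sets $\ran(U_iW)$ are pairwise disjoint, since the $x_i$ are distinct and $\mathcal G^0$ is Hausdorff. Then each $y\in W$ has $n$ distinct points $\ran(U_iy)$ in its orbit, so these are exactly its orbit, because orbits have exactly $n$ elements. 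The points $\ran(U_iy)$ lie in the disjoint sets $\ran(U_iW)$, with $y$ itself in $\ran(U_1W)$ (taking $U_1$ to meet $\mathcal G^0$ near $x$, e.g.\ $U_1\subseteq \mathcal G^0$). Hence the orbit of $y$ meets $W\subseteq\ran(U_1W)$ only in $y$. So $q|_W$ is injective. Being continuous and open, $q|_W$ is a homeomorphism onto an open set, and $q$ is a local homeomorphism.

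The main obstacle is that $\mathcal G^0/\mathcal R$ is Hausdorff; this is exactly where equal orbit sizes are used. With this choice of $W$, the saturation $q\inv(q(W))=\bigsqcup_i\ran(U_iW)$ is clopen, as a finite union of compact open sets. Moreover, $\ran(U_iW)\ni y\mapsto \ran(U_1\,\sour(\ldots))$, that is, $z\mapsto$ the unique point of the orbit of $z$ in $W$, is a continuous retraction $\rho_W\colon q\inv(q(W))\to W$ given on $\ran(U_iW)$ by $z\mapsto \sour(U_i z)$. Now take $x,x'$ in different orbits. Choose $W\ni x$ as above and, shrinking, $W'\ni x'$ with $W'$ disjoint from the finite orbit of $x$. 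Form the compact open set $W\setminus \rho_W\bigl(q\inv(q(W'))\cap q\inv(q(W))\bigr)$; this set still contains $x$, since the orbit of $x$ misses $W'$, and it is compact open, because $\rho_W$ is a local homeomorphism and the removed set is the image of a clopen set. Its $q$-image and $q(W')$ are then disjoint open neighbourhoods of $q(x)$ and $q(x')$. Thus $\mathcal G^0/\mathcal R$ is compact.

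Finally, $\mathcal G^0/\mathcal R$ is covered by the open sets $q(W)$, each homeomorphic to a totally disconnected space. A compact Hausdorff space that is locally totally disconnected has a basis of clopen sets, namely the images of compact open subsets of such $W$, and is therefore totally disconnected.
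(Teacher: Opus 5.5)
Your proof is correct. The openness and local-injectivity steps are essentially the paper's: bisections $U_i$ with $U_1\subseteq\mathcal G^0$, ranges made pairwise disjoint by shrinking, and exactly $n$ points per orbit.

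The genuine difference is how you get Hausdorffness of $\mathcal G^0/\mathcal R$. The paper notes that $\mathcal R$ is the image of the quasi-compact space $\mathcal G$ under $(\ran,\sour)$, hence closed in the Hausdorff space $\mathcal G^0\times\mathcal G^0$. It then quotes the standard fact (Bourbaki) that a compact Hausdorff space modulo a closed equivalence relation is Hausdorff. This shows your remark that Hausdorffness is ``exactly where equal orbit sizes are used'' is mistaken: equal orbit sizes are needed only for local injectivity of $q$.

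Your direct separation argument is still valid. The removed set is compact and open in $W$. It misses $x$ because the orbit of $x$ misses $W'$. And $q(W_0)\cap q(W')=\emptyset$ because $\rho_W$ fixes $W$. You do implicitly need $W'$ to be built like $W$, so that its saturation is clopen.

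The retraction is superfluous, though, since $\rho_W(K)=W\cap q^{-1}(q(W'))$. More simply, for any compact open $W'\subseteq\mathcal G^0$ the saturation $\ran(\sour^{-1}(W'))$ is clopen by quasi-compactness of $\mathcal G$ alone. So if $W'\ni x'$ avoids the finite orbit of $x$, this saturated clopen set and its complement already separate $q(x')$ from $q(x)$.

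Your route buys self-containment, with no appeal to the quotient theorem. The paper's buys brevity and makes clear which hypothesis does what.

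One minor slip: on $\ran(U_iW)$ the retraction should be $z\mapsto\sour(zU_i)$, the source of the arrow of $U_i$ with range $z$, rather than $\sour(U_iz)$.
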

\begin{proof}
    Notice that $\mathcal R$ is the image of $(\ran,\sour)\colon \mathcal G\to \mathcal G^0\times \mathcal G^0$, and hence is closed since $\mathcal G$ is quasi-compact and $\mathcal G^0\times \mathcal G^0$ is Hausdorff.  Thus $\mathcal G^0/\mathcal R$ is Hausdorff by~\cite[Proposition 10.4.8]{bourbakitop}, and hence compact.  

    The map $q$ is clearly open since $q\inv q(U) =\ran( \sour\inv(U))$ for $U\subseteq \mathcal G^0$.   To see that it is locally injective, assume that all orbits have size $n$.  Fix $x\in \mathcal G^0$.  Let $x=x_1,x_2\ldots, x_n$ be the orbit of $x$.    Choose compact bisections $U_1,\ldots, U_n$ such that $x\in \sour(U_i)$ and $x_i\in \ran(U_i)$.  We may assume that $U_1\subseteq \mathcal G^0$.  Since $\mathcal G^0$ is Hausdorff, we can find pairwise disjoint compact neighborhoods $V_1,\ldots, V_n$ of $x_1,\ldots, x_n$ in $\mathcal G^0$ and replace $U_i$ by $V_iU_i$ to ensure that the $\ran(U_i)$ are pairwise disjoint. Let $U=\bigcap_{i=1}^n\sour(U_i)$, an open subset of $\mathcal G^0$.  We claim that $q|_U$ is injective.  Indeed, if $y\in U$, then there exist $y_i\in \ran(U_i)$ in the orbit of $y$ for $i=1,\ldots, n$, and these are distinct since the $\ran(U_i)$ are pairwise disjoint. Moreover, $y_1=y$ as $U_1\subseteq \mathcal G^0$.  Hence $y=y_1,y_2,\ldots, y_n$ is the whole orbit of $y$.  Since $U\subseteq \sour(U_1)=U_1=\ran(U_1)$ is disjoint from $\ran(U_i)$ for $i>1$, we deduce that the only element from this orbit in $U$ is $y$.
 We conclude that $q$ is a local homeomorphism. Consequently, $\mathcal G^0/\mathcal R$ is totally disconnected. 
\end{proof}

\begin{Prop}\label{p:equal.orbit.case}
    Let $\mathcal G$ be a quasi-compact ample groupoid such that all orbits have the same size.  Suppose that each isotropy group has order a unit in $R$.  Then $R\mathcal G$ has $R$-homological dimension $0$.
\end{Prop}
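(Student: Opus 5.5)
The plan is to reduce to the group bundle case (Theorem~\ref{t:grp.bundle}) via the Morita-type reduction of Theorem~\ref{t:cheap.Morita}, in the same way Theorem~\ref{t:hom.zero.prinipal} reduced a compact principal groupoid to a space.

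First, by Proposition~\ref{p:local.homeo.equal.orbit}, $\mathcal G^0/\mathcal R$ is compact and totally disconnected and $q\colon\mathcal G^0\to\mathcal G^0/\mathcal R$ is a local homeomorphism. Cover $\mathcal G^0/\mathcal R$ by finitely many pairwise disjoint clopen sets over each of which $q$ admits a continuous section, and glue these to obtain a global continuous section $t$. Then $Y=t(\mathcal G^0/\mathcal R)$ is compact. It is open because $t$ is a section of a local homeomorphism: locally $t$ agrees with the inverse of $q$ restricted to an open set on which $q$ is injective. Moreover $Y$ meets every orbit exactly once, so it is full.

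Next, I identify $\mathcal G^Y_Y$. Since $Y$ meets each orbit in exactly one point, any arrow with source and range in $Y$ is isotropy, so $\mathcal G^Y_Y$ is a group bundle over $Y$. It is a clopen subset of the quasi-compact $\mathcal G$, namely $\sour\inv(Y)\cap\ran\inv(Y)$ with $Y$ clopen in $\mathcal G^0$, hence quasi-compact. Its isotropy groups are isotropy groups of $\mathcal G$, so their orders are units in $R$. Theorem~\ref{t:grp.bundle} then shows that $\mathcal G^Y_Y$ has $R$-homological dimension $0$.

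Finally, I transfer this back to $\mathcal G$. By Corollary~\ref{c:flat.rg}, it suffices to show $R\mathcal G^0$ is flat over $R\mathcal G$. The argument in the proof of Theorem~\ref{t:cheap.Morita} works verbatim over $R$: using $\mathcal G^0$ compact, fullness and inclusion-exclusion, $1_Y$ is a full idempotent of $R\mathcal G$. We have $1_YR\mathcal G1_Y=R\mathcal G^Y_Y$ and $R\mathcal G^0\cdot 1_Y=RY$. By the Morita discussion preceding Theorem~\ref{t:cheap.Morita}, $M\mapsto M1_Y$ is an equivalence from right $R\mathcal G$-modules to right $R\mathcal G^Y_Y$-modules, so it preserves and reflects flatness. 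Since $RY$ is flat over $R\mathcal G^Y_Y$, it follows that $R\mathcal G^0$ is flat over $R\mathcal G$. Alternatively, $\Tor^{R\mathcal G}_n(R\mathcal G^0,M)\cong\Tor^{R\mathcal G^Y_Y}_n(RY,1_YM)=0$ for $n\geq1$, combined with Corollary~\ref{c:weakdim}.

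The main obstacle I expect is making sure $Y$ is genuinely compact open and that $\mathcal G^Y_Y$ is quasi-compact in the non-Hausdorff setting. Closedness of $\mathcal G^Y_Y$ in $\mathcal G$ should suffice, since it is a preimage of a clopen set under continuous maps. The remaining steps are essentially formal consequences of earlier results.
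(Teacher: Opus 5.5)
Your proposal is correct and follows the paper's proof essentially step for step: the section $t$ produces a compact open full $Y$ meeting each orbit exactly once, $\mathcal G^Y_Y$ is then a clopen (hence quasi-compact) group bundle to which Theorem~\ref{t:grp.bundle} applies, and the result is transferred back to $\mathcal G$ through the full idempotent $1_Y$. The only cosmetic difference is that you redo the Morita step over $R$, either via flatness of $R\mathcal G^0$ and Corollary~\ref{c:flat.rg} or via the $\Tor$ isomorphism and Corollary~\ref{c:weakdim}, whereas the paper applies Theorem~\ref{t:cheap.Morita} over $\mathbb Z$ directly to a unitary $R\mathcal G$-module $M$, noting that $1_YM$ is a unitary $R\mathcal G^Y_Y$-module.
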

\begin{proof}
    By Proposition~\ref{p:local.homeo.equal.orbit}, $q\colon \mathcal G^0\to \mathcal G^0/\mathcal R$ is a local homeomorphism and $\mathcal G^0/\mathcal R$ is compact and totally disconnected, where $\mathcal R$ is the orbit equivalence relation. Thus, we can find a section $t\colon \mathcal G^0/\mathcal R\to \mathcal G^0$ with compact open image $Y = t(\mathcal G^0/\mathcal R)$ as in the proof of Theorem~\ref{t:hom.zero.prinipal}.  Note that $Y$ is full and intersects each orbit exactly once, and so $\mathcal G^Y_Y$ is a  group bundle, clopen in $\mathcal G$.  Since $\mathcal G$ is quasi-compact, so is $\mathcal G^Y_Y$.  Therefore, $\mathcal G^Y_Y$ has $R$-homological dimension $0$ by Theorem~\ref{t:grp.bundle}.  If $M$ is a unitary left $R\mathcal G$-module, then $1_YM$ is a unitary left $R\mathcal G^Y_Y$-module.  Thus, $H_n(\mathcal G,M)\cong H_n(\mathcal G^Y_Y,1_YM)=0$ for $n\geq 1$ by Theorem~\ref{t:cheap.Morita}, and so $\mathcal G$ has $R$-homological dimension $0$.  
\end{proof}

The following lemma is~\cite[Lemma~3.9]{regulargpd}

\begin{Lemma}\label{l:lower.semi}
Let $\mathcal G$ be an ample groupoid and $k\geq 1$ be a natural number.  Then the set $O_k$ of elements $x\in \mathcal G^0$ whose orbit has at least $k$ elements is an open invariant subset.
\end{Lemma}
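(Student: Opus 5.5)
We must show two things about $O_k$: that it is invariant, and that it is open.

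Invariance is immediate. The orbit of $x$ depends only on the orbit class of $x$, so if $x\in O_k$ and $y$ lies in the orbit of $x$, then $y$ has the same orbit and hence $y\in O_k$.

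For openness we argue locally, in the same way as the local injectivity argument in the proof of Proposition~\ref{p:local.homeo.equal.orbit}.

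1. Fix $x\in O_k$ and choose $k$ distinct elements $x=x_1,x_2,\ldots,x_k$ of its orbit.
2. For each $i$ pick $g_i\in\mathcal G$ with $\sour(g_i)=x$ and $\ran(g_i)=x_i$, taking $g_1=x$. Choose compact bisections $U_i$ with $g_i\in U_i$, and take $U_1\subseteq\mathcal G^0$.
3. Since $\mathcal G^0$ is Hausdorff, choose pairwise disjoint compact open neighborhoods $V_1,\ldots,V_k$ of $x_1,\ldots,x_k$ in $\mathcal G^0$. Replace $U_i$ by $V_iU_i$. This is still a compact open bisection containing $g_i$, and now $\ran(U_i)\subseteq V_i$, so the sets $\ran(U_i)$ are pairwise disjoint.
4. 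Set $W=\bigcap_{i=1}^k\sour(U_i)$. Each $\sour(U_i)$ is open because bisections are open and $\sour$ is a local homeomorphism, so $W$ is an open subset of $\mathcal G^0$, and it contains $x$.
5. For $y\in W$ there is, for each $i$, a unique $h_i\in U_i$ with $\sour(h_i)=y$. The points $\ran(h_i)\in\ran(U_i)$ lie in the orbit of $y$ and are pairwise distinct, since the $\ran(U_i)$ are disjoint.
6. Hence the orbit of $y$ has at least $k$ elements, so $W\subseteq O_k$, and $O_k$ is open.

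The only point needing care is step 3: shrinking the bisections so that their ranges are disjoint, while keeping the source images open neighborhoods of $x$. Multiplying on the left by the unit neighborhoods $V_i$ handles this. No Hausdorffness of $\mathcal G$ is needed, only of $\mathcal G^0$.
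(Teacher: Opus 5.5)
Your proof is correct: invariance is immediate, and for openness you separate $k$ orbit points by disjoint unit neighborhoods $V_i$, shrink the bisections to $V_iU_i$, and take $\bigcap_i \sour(U_i)$, which uses only Hausdorffness of $\mathcal G^0$. The paper does not prove this lemma itself but quotes it from \cite[Lemma~3.9]{regulargpd}; your argument is the natural one and is the same bisection-shrinking technique the paper uses for local injectivity in Proposition~\ref{p:local.homeo.equal.orbit}.
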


We can now handle general quasi-compact groupoids.

\begin{Prop}\label{p:suff}
Suppose that $R$ is a unital ring and $\mathcal G$ is a quasi-compact ample groupoid such that the order of any isotropy group is invertible in $R$.  Then $\mathcal G$ has $R$-homological dimension $0$.
\end{Prop}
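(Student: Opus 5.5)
We argue by induction on the maximal orbit size, using the long exact sequence of Theorem~\ref{t:exact.seq} together with the filtration by the open invariant sets $O_k$ of Lemma~\ref{l:lower.semi}. Since $\mathcal G$ is quasi-compact, it is uniformly bounded, so there is $N$ with every orbit of size at most $N$. Let $N$ be the largest orbit size that actually occurs. Put $U=O_N$, the set of units whose orbit has exactly $N$ elements. By Lemma~\ref{l:lower.semi}, $U$ is open and invariant, and $C=\mathcal G^0\setminus U$ is closed and invariant with all orbits of size at most $N-1$. Because $M$ is an $R\mathcal G$-module, both $R\mathcal G^U_U\cdot M$ and $M/R\mathcal G^U_U\cdot M$ are $R$-modules for the appropriate groupoid algebras, and $R\mathcal G^U_U\cdot M$ is unitary. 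In the sequence of Theorem~\ref{t:exact.seq}, $H_n(\mathcal G,M)$ with $n\geq 1$ sits between $H_n(\mathcal G^U_U,-)$ and $H_n(\mathcal G^C_C,-)$. It therefore suffices to show that $\mathcal G^U_U$ and $\mathcal G^C_C$ both have $R$-homological dimension $0$.

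The piece $\mathcal G^C_C$ is handled by induction. It is closed in the quasi-compact $\mathcal G$, hence quasi-compact, and it is an ample groupoid since $C$ is closed in $\mathcal G^0$. Its isotropy groups are isotropy groups of $\mathcal G$, and its orbits have size at most $N-1$. The base case is the empty groupoid, or alternatively orbits of size $1$, where $\mathcal G$ is a group bundle and Theorem~\ref{t:grp.bundle} applies.

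The piece $\mathcal G^U_U$ is open but not quasi-compact in general, since $U$ need not be compact. Here we use Corollary~\ref{c:dir.hom.zero}. Write $U$ as a directed union of compact open subsets $K$, and replace each $K$ by its saturation $K'=\ran(\mathcal G_K)$. Each $K'$ is compact open and invariant, because $\ran$ is open and $\mathcal G_K$ is quasi-compact, being closed in $\mathcal G$ since $K$ is clopen in $\mathcal G^0$. Then $\mathcal G^{K'}_{K'}$ is clopen in $\mathcal G$, hence quasi-compact. All of its orbits have size exactly $N$, and its isotropy orders are units in $R$. By Proposition~\ref{p:equal.orbit.case}, each $\mathcal G^{K'}_{K'}$ has $R$-homological dimension $0$. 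Since $\mathcal G^U_U$ is the directed union of these open subgroupoids, Corollary~\ref{c:dir.hom.zero} shows that $\mathcal G^U_U$ has $R$-homological dimension $0$.

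Two bookkeeping points need checking. First, Theorem~\ref{t:exact.seq} is stated with $\mathbb Z$-coefficients, but its terms are computed from $R$-modules. $R$-homological dimension $0$ of the pieces means that $H_n$ vanishes for $n\geq 1$ on unitary $R$-module coefficients, which is exactly what these terms are. Second, the quotient $M/R\mathcal G^U_U\cdot M$ must be a unitary $R\mathcal G^C_C$-module, which follows from $R\mathcal G^C_C\cong R\mathcal G/R\mathcal G^U_U$. The main subtlety I expect is verifying that the saturations $K'$ are compact open and that quasi-compactness passes to the clopen restrictions.
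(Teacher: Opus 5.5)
Your proposal is correct and follows the paper's proof essentially step for step. You split off the open invariant set $U=O_N$ of maximal orbit size and handle $\mathcal G^C_C$ by induction. You treat $\mathcal G^U_U$ as a directed union of the clopen, quasi-compact restrictions to compact open invariant saturations, using Proposition~\ref{p:equal.orbit.case} and Corollary~\ref{c:dir.hom.zero}, and conclude with Theorem~\ref{t:exact.seq}. The only difference is cosmetic: the paper inducts on the number of distinct orbit sizes, with base case via Proposition~\ref{p:equal.orbit.case}, rather than on the maximal orbit size.
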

\begin{proof}
%Since $\mathcal G$  is quasi-compact, its orbits are finite.
We can cover $\mathcal G$ by finitely many, say $r$, bisections.  Then, for any $x\in \mathcal G^0$, the size of the orbit of $x$ is at most $r$.  
Let $d_1<d_2<\cdots<d_m$ be the distinct orbit sizes.  We prove the result by induction on $m$.  If $m=1$, then all orbits of $\mathcal G$ have the same size and the result follows from Proposition~\ref{p:equal.orbit.case}.  So, assume that $m>1$ and the result holds for $m-1$.  Let $U=\{x\in \mathcal G^0\mid |\mathcal O_x|=d_m\}$.  Then $U$ is open and invariant by Lemma~\ref{l:lower.semi} once we note that $U=O_{d_m}$ in the notation of that lemma.   Let $C=\mathcal G^0\setminus U$.  %Then $I=R\mathcal G^U_U$ is an ideal of $R\mathcal G$ and $R\mathcal G/I\cong R\mathcal G^C_C$.  
Note that $\mathcal G^U_U=\ran\inv (U)$ is open and $\mathcal G^C_C=\ran\inv (C)$ is closed and hence quasi-compact.  By construction, the orbits sizes in $\mathcal G^C_C$ are $d_1<d_2<\cdots<d_{m-1}$, and so by induction $\mathcal G^C_C$ is of $R$-homological dimension $0$.     We next show that $\mathcal G^U_U$ is of $R$-homological dimension $0$.

Let $V\subseteq U$ be compact open.  Then $V$ is clopen in $\mathcal G^0$, and so $\sour\inv(V)\subseteq \mathcal G$ is clopen, whence quasi-compact and open.  Thus $W=\ran(\sour\inv(V))$ is a compact open invariant subset of $U$ containing $V$.  Hence the compact open invariant subsets of $U$ are cofinal among compact open subsets of $U$.  Moreover, if $W$ is a compact open invariant subspace of $U$, then  $W$ is clopen in $\mathcal G^0$ (as the latter is Hausdorff), and so $\mathcal G^W_W=\ran\inv(W)$ is a clopen subgroupoid of $\mathcal G$, hence quasi-compact, with all orbits of size $d_m$ (as $W\subseteq U$ is invariant).  Hence $\mathcal G^W_W$ is of $R$-homological dimension $0$ by Proposition~\ref{p:equal.orbit.case}.  Since $G^U_U$ is the directed union of the quasi-compact open subgroupoids $\mathcal G^W_W$ where $W$ runs over the compact open invariant subsets of $U$, we conclude that $\mathcal G^U_U$ is of $R$-homological dimension $0$ by Corollary~\ref{c:dir.hom.zero}. 

Finally, the long exact sequence of Theorem~\ref{t:exact.seq} implies that $\mathcal G$ has $R$-homological dimension $0$ because $\mathcal G^U_U$ and $\mathcal G_C^C$ do.  Indeed,  \[0=H_n(\mathcal G^U_U,\mathbb ZG^U_U\cdot M)\to H_n(\mathcal G,M)\to H_n(\mathcal G^C_C,M/\mathbb Z\mathcal G^U_U\cdot M)=0\] is exact for any unitary $R\mathcal G$-module $M$.
\end{proof}

\begin{proof}[Proof of Theorem~B]
 Theorem~\ref{t:necessity} yields necessity.  For sufficiency, we apply Proposition~\ref{p:suff} and Corollary~\ref{c:dir.hom.zero}, keeping in mind a quasi-compact groupoid has finite isotropy groups.
\end{proof}

\bibliographystyle{abbrv}
\bibliography{standard2}
\end{document}